\documentclass[final]{dmtcs-episciences}
\usepackage[utf8]{inputenc}
\usepackage[numbers]{natbib}
\bibliographystyle{plainnat}

\usepackage[english]{babel}

\usepackage{listings}
\usepackage{enumerate}
\usepackage{layout}
\usepackage{verbatim}

\usepackage{amsmath,amsfonts,amssymb, amsthm}
\usepackage[mathscr]{euscript}
\mathchardef\mhyphen="2D 

\newtheorem{theorem}{Theorem}[section]
\newtheorem{proposition}[theorem]{Proposition}
\newtheorem{lemma}[theorem]{Lemma}
\newtheorem{corollary}[theorem]{Corollary}
\newtheorem{observation}[theorem]{Observation}

\theoremstyle{definition}
\newtheorem{definition}[theorem]{Definition}

\newcommand{\LR}{\textsc{lr}}
\newcommand{\LRcl}[1]{#1^{\LR}} 
\newcommand{\LRdot}{\odot_{\text{\LR}}} 
\newcommand{\Av}[1]{\text{Av}(#1)}

\newcommand{\includefigure}[2][scale=0.8]{\includegraphics[#1]{figure-#2}}
\newcommand{\mathfigure}[2][scale=0.8]{\vcenter{\hbox{\includefigure[#1]{#2}}}}
\newcommand{\centeredfigure}[2][scale=0.8]{$\mathfigure[#1]{#2}$}

\title{Splittability and 1-Amalgamability of Permutation Classes}
\keywords{permutation classes, splittability, amalgamability, 1-amalgamability}
\author{
Vít Jelínek\affiliationmark{1}\thanks{Supported by the project 16-01602Y of the Czech Science
Foundation.}
\and Michal Opler\affiliationmark{1}\thanks{This work has received financial support from
the Neuron Foundation for Support of Science.}
}
\affiliation{Computer Science Institute, Charles University in Prague}

\received{2017-5-1}
\revised{2017-9-8}
\accepted{2017-11-21}
\publicationdetails{19}{2017}{2}{4}{3292}

\begin{document}
\maketitle

\begin{abstract}
A permutation class $C$ is splittable if it is contained in a merge of two of its proper subclasses,
and it is 1-amalgamable if given two permutations $\sigma, \tau \in C$, each with a marked element,
we can find a permutation $\pi \in C$ containing both $\sigma$ and $\tau$ such that the two marked
elements coincide. It was previously shown that unsplittability implies 1-amalgamability. We prove
that unsplittability and 1-amalgamability are not equivalent properties of permutation classes by
showing that the class $\Av{1423, 1342}$ is both splittable and 1-amalgamable. Our construction is
based on the concept of LR-inflations, which we introduce here and which may be of independent
interest.
\end{abstract}

\section{Introduction}

In the study of permutation classes, a notable interest has recently been directed towards the
operation of merging. We say that a permutation $\pi$ is a \emph{merge} of $\sigma$ and $\tau$ if
the elements of $\pi$ can be colored red and blue so that the red elements form a copy of $\sigma$
and the blue elements form a copy of $\tau$. For instance, Claesson, Jelínek and Steingrímsson
\cite{Claesson2012} showed that every 1324-avoiding permutation can be merged from a 132-avoiding
permutation and a 213-avoiding permutation, and used this fact to prove that there are at most
$16^n$ 1324-avoiding permutations of length $n$.

A general problem that follows naturally is how to identify when a permutation class $C$ has proper
subclasses $A$ and $B$, such that every element of $C$ can be obtained as a merge of an element of
$A$ and an element of $B$. We say that such a permutation class $C$ is \emph{splittable}. Jelínek
and Valtr \cite{Jelinek2015} showed that every inflation-closed class is unsplittable and the class
of $\sigma$-avoiding permutations, where $\sigma$ is a direct sum of two nonempty permutations and
has length at least four, is splittable. Furthermore, they mentioned the connection of splittability
to more general structural properties of classes of relational structures studied in the area of
Ramsey theory, most notably the notion of 1-amalgamability. We say that a permutation class $C$ is
\emph{1-amalgamable} if given two permutations $\sigma, \tau \in C$, each with a marked element, we
can find a permutation $\pi \in C$ containing both $\sigma$ and $\tau$ such that the two marked
elements coincide.

Not much is known about 1-amalgamability of permutation classes. Jelínek and Valtr \citep[Lemma
1.5]{Jelinek2015}, using a more general result from Ramsey theory, showed that unsplittability
implies 1-amalgamability, and they raised the question whether there is a permutation class that is
both splittable and 1-amalgamable. In this paper, we answer this question by showing that the class
$\Av{1423, 1342}$ has both properties.

For this task, we will introduce a slightly weaker property than being inflation-closed, that is
being closed under inflating just the elements that are left-to-right minima. We say that an element
of permutation $\pi$ is a \emph{left-to-right minimum}, or just LR-minimum, if it is smaller than
all the elements preceding it. In Section \ref{sec:LR inflations} we shall prove that certain
properties of a permutation class $C$ imply that its closure under inflating LR-minima is splittable
and 1-amalgamable. And finally in Section \ref{sec:main result} we show that the class $\Av{1423,
1342}$ is actually equal to the class $\Av{123}$ closed under inflating left-to-right minima and
that $\Av{123}$ has the desired properties.

\section{Basics}
A \emph{permutation} $\pi$ of length $n \geq 1$ is a sequence of all the $n$ distinct numbers from
the set $[n] = \lbrace 1, 2, \ldots, n \rbrace$. We denote the $i$-th element of $\pi$ as $\pi_i$.
Note that we omit all punctuation when writing out short permutations, e.g., we write 123 instead of
1, 2, 3. The set of all permutations of length $n$ is denoted $S_n$.

We say that two sequences of distinct numbers $a_1, \ldots, a_n$ and $b_1, \ldots, b_n$ are
\emph{order-isomorphic} if for every two indices $i < j$ we have $a_i < a_j$ if and only if $b_i <
b_j$. Given two permutations $\pi \in S_n$ and $\sigma \in S_k$, we say that $\pi$ \emph{contains}
$\sigma$ if there is a $k$-tuple $1 \leq i_1 < i_2 < \cdots < i_k \leq n$ such that the sequence
$\pi_{i_1}, \pi_{i_2}, \ldots, \pi_{i_k}$ is order-isomorphic to $\sigma$ and we say that such a
sequence is an \emph{occurrence} of $\sigma$ in $\pi$. Furthermore, we say that the corresponding
function $f: [k] \to [n]$ defined as $f(j) = i_j$ is an \emph{embedding} of $\sigma$ into $\pi$. In
the context of permutation containment, we often refer to the permutation $\sigma$ as a
\emph{pattern}.

A permutation that does not contain $\sigma$ is \emph{$\sigma$-avoiding} and we let $\Av{\sigma}$
denote the set of all $\sigma$-avoiding permutations. Similarly, for a set of permutations $F$ , we
let $\Av{F}$ denote the set of permutations that avoid all elements of $F$. Note that for small sets
$F$ we omit the curly braces, e.g., we simply write $\Av{\sigma, \rho}$ instead of $\Av{\lbrace
\sigma, \rho \rbrace}$.

We say that a set of permutations $C$ is a \emph{permutation class} if for every $\pi\in C$ and
$\sigma$ contained in $\pi$, $\sigma$ belongs to $C$ as well. Observe that a set of permutations $C$
is a permutation class if and only if there is a set $F$ such that $C = \Av{F}$. Moreover, for every
permutation class $C$, there is a unique inclusionwise minimal set $F$ such that $C=\Av{F}$; this
set $F$ is known as the \emph{basis} of~$C$. A class is said to be \emph{principal} if its basis has
a single element, i.e., if the class has the form $\Av{\sigma}$ for a permutation~$\sigma$.

Suppose that $\pi \in S_n$ is a permutation, let $\sigma_1, \ldots, \sigma_n$ be an $n$-tuple of
non-empty permutations, and let $m_i$ be the length of $\sigma_i$ for $i \in [n]$. The
\emph{inflation} of $\pi$ by the sequence $\sigma_1, \ldots, \sigma_n$, denoted by $\pi[\sigma_1,
\ldots, \sigma_n]$, is the permutation of length $m_1 +\cdots+ m_n$ obtained by concatenating $n$
sequences $\overline{\sigma}_1 \overline{\sigma}_2 \cdots \overline{\sigma}_n$ with these
properties:
\begin{itemize}
  \item for each $i \in [n]$, $\overline{\sigma}_i$ is order-isomorphic to $\sigma_i$, and
  \item for each $i, j \in [n]$, if $\pi_i < \pi_j$, then all the elements of $\overline{\sigma}_i$
  are smaller than all the elements of $\overline{\sigma}_j$.
\end{itemize}

\begin{figure}[h!]
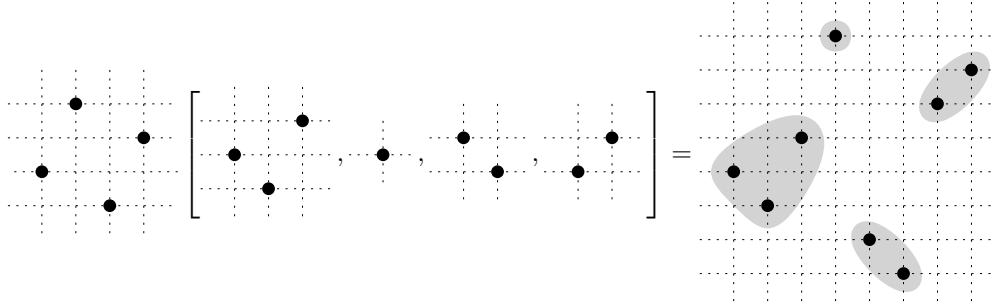

\centering
\begin{displaymath}
\mathfigure{perm-2413}\left[\mathfigure{perm-213}, \mathfigure{perm-1}, \mathfigure{perm-21},
\mathfigure{perm-12}\right] = \mathfigure{inflated-2413}
\end{displaymath}
\caption{An example of inflation: $2413[213, 1, 21, 12] = 43582167$.}
\label{fig:inflation}
\end{figure}

For two sets of permutations $A$ and $B$, we let $A[B]$ denote the set of all the permutations that
can be obtained as  an inflation of a permutation from $A$ by a sequence of permutations from $B$.
We say that a set of permutations $A$ is \emph{$\cdot[B]$-closed} if $A[B] \subseteq A$, and
similarly a set of permutations $B$ is \emph{$A[\cdot]$-closed} if $A[B] \subseteq B$. Finally, we
say that a set of permutations $C$ is \emph{inflation-closed} if $C[C] \subseteq C$.

There is a nice way to characterize an inflation-closed class through its basis. We say that a
permutation $\pi$ is \emph{simple} if it cannot be obtained by inflation from smaller permutations,
except for the trivial inflations $\pi[1, \ldots, 1]$ and $1[\pi]$. Inflation-closed permutation
classes are precisely the classes whose basis only contains simple permutations \citep[Proposition
1]{Albert2005}.

\section{Splittability and 1-amalgamability}
\label{sec:split and am}
We now focus on the properties of splittability and 1-amalgamability of permutation classes. Mostly,
we state or rephrase results that were already known. For more detailed overview, especially
regarding splittability, see Jelínek and Valtr \cite{Jelinek2015}.

\subsection{Splittability}
\label{sec:splittability}
We say that  a permutation $\pi$ is a \emph{merge} of permutations $\tau$ and $\sigma$, if it can be
partitioned into two disjoint subsequences, one of which is an occurrence of $\sigma$ and the other
is an occurrence of $\tau$. For two permutation classes $A$ and $B$, we write $A \odot B$ for the
class of all merges of a (possibly empty) permutation from A with a (possibly empty) permutation
from B. Trivially, $A \odot B$ is again a permutation class.

Conversely, we say that a multiset of permutation classes $\lbrace P_1, \ldots, P_m\rbrace$ forms a
\emph{splitting} of a permutation class $C$ if $C \subseteq P_1 \odot \cdots \odot P_m$. We call
$P_i$ the \emph{parts} of the splitting. The splitting is \emph{nontrivial} if none of its parts is
a superset of $C$, and the splitting is \emph{irredundant} if no proper submultiset of $\lbrace P_1,
\ldots, P_m\rbrace$ forms a splitting of $C$. A permutation class $C$ is then \emph{splittable} if
$C$ admits a nontrivial splitting.

The following simple lemma is due to Jelínek and Valtr \citep[Lemma 1.3]{Jelinek2015}.

\begin{lemma}
\label{lemma:splittable conditions}
For a class $C$ of permutations, the following
properties are equivalent:
\begin{enumerate}[(a)]
  \item $C$ is splittable.
  \item $C$ has a nontrivial splitting into two parts.
  \item $C$ has a splitting into two parts, in which each part is a proper subclass of $C$.
  \item $C$ has a nontrivial splitting into two parts, in which each part is a principal class.
\end{enumerate}
\end{lemma}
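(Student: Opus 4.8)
The plan is to establish the four equivalences through a short cycle of implications, isolating the one genuinely combinatorial step. The implications (b) $\Rightarrow$ (a), (c) $\Rightarrow$ (a) and (d) $\Rightarrow$ (a) are immediate: a splitting into two proper subclasses, or a nontrivial splitting into two principal classes, is in particular a nontrivial splitting, since a proper subclass of $C$ is never a superset of $C$, and nontriviality is asserted outright in (d). Thus the real content lies in proving (a) $\Rightarrow$ (b), and then upgrading a nontrivial two-part splitting to the two special forms demanded by (c) and (d). Throughout I would use two structural facts: that $\odot$ is associative and commutative on the multiset of its arguments, and that every part of any splitting, being a permutation class, is downward closed.

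For (a) $\Rightarrow$ (b) I would reduce the number of parts by induction on $m$, where $\{P_1, \ldots, P_m\}$ is a nontrivial splitting with $C \subseteq P_1 \odot \cdots \odot P_m$. First note $m \geq 2$, since a one-part splitting forces $C \subseteq P_1$, contradicting nontriviality. Set $A = P_1$ and $B = P_2 \odot \cdots \odot P_m$, so that $C \subseteq A \odot B$ by associativity. If $B$ is not a superset of $C$, then $\{A, B\}$ is already a nontrivial two-part splitting and we are done. Otherwise $C \subseteq B = P_2 \odot \cdots \odot P_m$, so $\{P_2, \ldots, P_m\}$ is a splitting of $C$ into $m-1$ parts, still nontrivial because each remaining $P_i$ fails to contain $C$; the induction hypothesis then applies. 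When $m = 2$ the second case cannot occur, since $B = P_2$ is not a superset of $C$, which anchors the induction.

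It remains to pass from a nontrivial two-part splitting $\{A, B\}$ to the forms (c) and (d). For (c) I would intersect with $C$: setting $A' = A \cap C$ and $B' = B \cap C$, which are again permutation classes, any $\pi \in C$ splits as a merge of some $\sigma \in A$ and $\tau \in B$, and since $\sigma, \tau$ are contained in $\pi$ they lie in $C$, hence in $A'$ and $B'$ respectively; thus $C \subseteq A' \odot B'$. Moreover $A' \subsetneq C$, because $A' = C$ would mean $C \subseteq A$, contradicting nontriviality, and likewise $B' \subsetneq C$. For (d) I would instead replace each part by a principal class: choosing $\alpha \in C \setminus A$ and $\beta \in C \setminus B$, which exist by nontriviality, downward-closedness of $A$ gives $A \subseteq \Av{\alpha}$, and similarly $B \subseteq \Av{\beta}$, so $C \subseteq \Av{\alpha} \odot \Av{\beta}$; this splitting is nontrivial since $\alpha \in C \setminus \Av{\alpha}$ and $\beta \in C \setminus \Av{\beta}$. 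The main obstacle is the inductive reduction in (a) $\Rightarrow$ (b), where one must argue that collapsing the tail $P_2 \odot \cdots \odot P_m$ into a single part either already succeeds or returns a strictly smaller nontrivial splitting; all remaining steps are short consequences of downward-closure.
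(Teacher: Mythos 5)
Your proof is correct. The paper itself gives no proof of this lemma --- it is quoted from Jelínek and Valtr (their Lemma 1.3) --- so there is nothing to compare against, but your argument (collapsing a multi-part splitting to two parts by associativity of $\odot$, intersecting the parts with $C$ to get proper subclasses, and replacing each part $A$ by $\Av{\alpha}$ for some $\alpha \in C \setminus A$ using downward closure) is exactly the standard one and all steps check out.
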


Following the previous Lemma~\ref{lemma:splittable conditions}, we can characterize a splittable
class $C$ by the splittings of the form $\lbrace \Av{\pi}, \Av{\sigma} \rbrace$, where both $\pi$
and $\sigma$ are permutations from $C$. We want to identify permutations inside $C$ that cannot
define any such splitting.

\begin{definition}
\label{def:unavoidable}
Let $C$ be a permutation class. We say that a permutation $\pi \in C$ is \emph{unavoidable in $C$},
if for any permutation $\tau \in C$, there is a permutation $\sigma \in C$ such that any red-blue
coloring of $\sigma$ has a red copy of $\tau$ or a blue copy of $\pi$. We let $U_C$ denote the set
of all unavoidable permutations in $C$.
\end{definition}

It is easy to see that a permutation $\pi$ is unavoidable in $C$ if and only if $C$ has no
nontrivial splitting into two parts with one part being $\Av{\pi}$. A more detailed overview of the
properties of unavoidable permutations was provided by Jelínek and Valtr~\citep[Observation
2.2-3]{Jelinek2015}, we will mention only the observations needed for our results.

Note that for a nonempty permutation class $C$, the set of unavoidable permutations $U_C$ is in fact
a nonempty permutation class contained in the class $C$. We can use the class of unavoidable
permutations to characterize the unsplittable permutation classes.

\begin{observation}
\label{lemmma:UC=C}
A permutation class $C$ is unsplittable if and only if $U_C = C$.
\end{observation}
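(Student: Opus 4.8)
The plan is to derive the equivalence directly from the characterization stated just above, namely that a permutation $\pi \in C$ is unavoidable in $C$ if and only if $C$ has no nontrivial splitting into two parts one of which is $\Av{\pi}$, together with the fact that $U_C$ is always a subclass of $C$ and the reformulation of splittability provided by Lemma~\ref{lemma:splittable conditions}.

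For the forward direction, I would assume $C$ is unsplittable and show $U_C = C$. Since $U_C \subseteq C$ holds for every nonempty class, it suffices to prove $C \subseteq U_C$. I would take an arbitrary $\pi \in C$ and argue by contraposition: if $\pi$ were not unavoidable, the stated characterization would supply a nontrivial splitting of $C$ with one part equal to $\Av{\pi}$, and such a splitting witnesses that $C$ is splittable, contradicting the hypothesis. Hence every $\pi \in C$ lies in $U_C$, and $U_C = C$.

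For the reverse direction, I would assume $U_C = C$ and suppose toward a contradiction that $C$ is splittable. By Lemma~\ref{lemma:splittable conditions}(d), $C$ then admits a nontrivial splitting into two principal parts, say $\lbrace \Av{\pi}, \Av{\sigma} \rbrace$. Nontriviality forces $\Av{\pi} \not\supseteq C$, which means $\pi \in C$ and hence $\pi \in U_C$; but then the characterization of unavoidability says no nontrivial splitting of $C$ can have $\Av{\pi}$ as a part, contradicting the existence of $\lbrace \Av{\pi}, \Av{\sigma} \rbrace$. Therefore $C$ is unsplittable.

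The argument is short because the real work is packaged into the preceding characterization of unavoidability and into Lemma~\ref{lemma:splittable conditions}; the only point needing care is the translation between an arbitrary nontrivial splitting and one whose parts are principal with a prescribed part $\Av{\pi}$, which is exactly what clause (d) of Lemma~\ref{lemma:splittable conditions} provides. I expect the main (and only minor) obstacle to be checking that the nontriviality condition is correctly read as $\pi \in C$, equivalently $\Av{\pi} \not\supseteq C$, since this equivalence is what links membership in $C$ to the applicability of the unavoidability criterion in both directions.
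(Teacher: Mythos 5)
Your argument is correct and is exactly the intended one: the paper states this as an observation without proof precisely because it follows immediately from the preceding characterization of unavoidable permutations together with Lemma~\ref{lemma:splittable conditions}(d), which is the route you take. The one point needing care --- that nontriviality of the splitting ($\Av{\pi}\not\supseteq C$) yields $\pi\in C$ because $C$ is closed under containment --- is handled correctly in your write-up.
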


Furthermore, we can show that if $C$ is closed under certain inflations then also $U_C$ is closed
under the same inflations. Again, the following result is due to Jelínek and Valtr~\citep[Lemma
2.4]{Jelinek2015}.

\begin{lemma}
Let $C$ be a permutation class. If, for a set of permutations $X$, the class $C$ is closed under
$\cdot[X]$, then $U_C$ is also closed under $\cdot[X]$, and if $C$ is closed under $X[\cdot]$, then
so is $U_C$. Consequently, if $C$ is inflation-closed, then $U_C = C$ and $C$ is unsplittable.
\end{lemma}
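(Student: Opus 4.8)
The plan is to prove the two closure statements separately and then read off the final consequence. Throughout, write $n = |\pi|$ for the length of an outer permutation and recall from Definition~\ref{def:unavoidable} that $\pi \in U_C$ means: for every target $\tau \in C$ there is a witness $\sigma \in C$ such that every red-blue coloring of $\sigma$ contains a red copy of $\tau$ or a blue copy of $\pi$. In both directions the membership of the inflated permutation in $C$ is immediate from the relevant hypothesis ($C[X] \subseteq C$, respectively $X[C] \subseteq C$) together with $U_C \subseteq C$, so the real content is always to produce, for an arbitrary $\tau$, a witness forcing a red $\tau$ or a blue copy of the inflation.

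I would first handle the $X[\cdot]$ direction, which is the cleaner of the two. Fix $\xi \in X$ of length $m$ and $\pi_1, \dots, \pi_m \in U_C$, and set $\rho = \xi[\pi_1, \dots, \pi_m]$. Given $\tau$, choose for each $i$ a witness $\Sigma_i \in C$ for the unavoidability of $\pi_i$ against $\tau$, and take $\sigma = \xi[\Sigma_1, \dots, \Sigma_m]$, which lies in $X[C] \subseteq C$. For any coloring of $\sigma$, restrict attention to each block $\Sigma_i$: by the witness property that block contains a red $\tau$ (and we are done) or a blue copy of $\pi_i$. If no block yields a red $\tau$, then every block $i$ contains a blue copy of $\pi_i$; since the blocks are arranged according to $\xi$, collecting these blue copies produces a blue occurrence of $\xi[\pi_1, \dots, \pi_m] = \rho$. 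Hence $\rho \in U_C$.

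The $\cdot[X]$ direction is the delicate one. Here $\pi \in U_C$ is the skeleton while the blocks $\xi_1, \dots, \xi_n \in X$ are merely elements of $X$, not unavoidable, so I cannot inflate by witnesses as above. The naive attempt, inflating a witness $\sigma_0$ for $\pi$ uniformly by a single $\eta$ and reading the blocks off from an all-blue copy of $\pi$, would require an $\eta \in X$ containing all of $\xi_1, \dots, \xi_n$ simultaneously, which $X$ need not provide; this is the main obstacle. I would get around it by inflating one block at a time. Concretely, I prove a single-point inflation step: if $\varrho \in U_C$, if $\xi \in X$, and if the permutation $\varrho'$ obtained from $\varrho$ by inflating a single position $s$ by $\xi$ happens to lie in $C$, then $\varrho' \in U_C$. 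To see this, take a witness $\sigma_0$ for $\varrho$ against $\tau$ and set $\sigma = \sigma_0[\xi, \dots, \xi] \in C[X] \subseteq C$; color $\sigma$, and declare a position of $\sigma_0$ red exactly when its block contains a red point. Unavoidability of $\varrho$ yields either a red copy of $\tau$ in this induced coloring, giving a red $\tau$ in $\sigma$, or a blue copy of $\varrho$ all of whose blocks are entirely blue; in the latter case the block playing role $s$ is an all-blue copy of $\xi$, so taking one blue point from every other block together with this whole block produces a blue $\varrho'$. Iterating this step along $\pi^{(k)} = \pi[\xi_1, \dots, \xi_k, 1, \dots, 1]$, each of which is a subpattern of $\pi' = \pi[\xi_1, \dots, \xi_n]$ and hence lies in $C$, carries $\pi = \pi^{(0)} \in U_C$ up to $\pi^{(n)} = \pi' \in U_C$.

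Finally, for the consequence, suppose $C$ is inflation-closed, so $C[C] \subseteq C$. Applying the $X[\cdot]$ statement with $X = C$ gives $C[U_C] \subseteq U_C$. Since $U_C$ is a nonempty permutation class it contains the one-point permutation $1$, so for every $\pi \in C$ we have $\pi = \pi[1, \dots, 1] \in C[U_C] \subseteq U_C$; thus $C \subseteq U_C$, and as $U_C \subseteq C$ always holds we conclude $U_C = C$. By Observation~\ref{lemmma:UC=C} this means $C$ is unsplittable.
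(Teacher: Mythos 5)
Your proposal is correct. Note that the paper does not actually prove this lemma --- it is quoted from Jelínek and Valtr --- but it does prove the exactly analogous statement for $A_C$ immediately afterwards, and that proof is the natural point of comparison. Your $X[\cdot]$ direction (inflate $\xi$ by the witnesses $\Sigma_i$ and read off either a red $\tau$ in some block or blue copies of every $\pi_i$ assembling into a blue $\xi[\pi_1,\dots,\pi_m]$) and your derivation of the consequence via the singleton permutation match the paper's template for $A_C$ essentially step for step. The one place you genuinely diverge is the $\cdot[X]$ direction: the paper's $A_C$ proof dodges the obstacle you identify (no single $\eta\in X$ need contain all the blocks $\xi_1,\dots,\xi_n$) by observing that $C$ closed under $\cdot[X]$ is also closed under $\cdot[X[X]]$, hence one may replace $X$ by its inflation closure and reduce to a uniform inflation $\pi[\rho,\dots,\rho]$, which suffices because $U_C$ (like $A_C$) is a permutation class and $\pi[\xi_1,\dots,\xi_n]$ is a subpattern of $\pi[\rho,\dots,\rho]$. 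You instead prove a single-position inflation step --- inflate the witness $\sigma_0$ uniformly by one $\xi$, induce a coloring on $\sigma_0$ by declaring a block red iff it contains a red point, and extract either a red $\tau$ (one red point per red block) or a blue copy of $\varrho$ with entirely blue blocks, hence a blue $\varrho'$ --- and then iterate through $\pi^{(0)},\dots,\pi^{(n)}$. Both routes are sound; the paper's reduction is shorter once the inflation-closure observation is made, while your induction is more self-contained and makes explicit why each intermediate permutation $\pi^{(k)}$ lies in $C$ (as a subpattern of $\pi[\xi_1,\dots,\xi_n]$). All the individual verifications in your argument (membership of the witnesses in $C$, the assembly of blue copies across blocks, the use of $1\in U_C$ in the final step) are correct.
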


\subsection{Amalgamability}
\label{sec:amalgability}

Now let us introduce the concept of amalgamation, which comes from the general study of relational
structures.

We say that a permutation class $C$ is \emph{$\pi$-amalgamable} if for any two permutations $\tau_1,
\tau_2 \in C$ and any two mappings $f_1$ and $f_2$, where $f_i$ is an embedding of $\pi$ into
$\tau_i$, there is a permutation $\sigma \in C$ and two mappings $g_1$ and $g_2$ such that $g_i$ is
an embedding of $\tau_i$ into $\sigma$ and $g_1 \circ f_1 = g_2 \circ f_2$. We also say, for $k \in
\mathbb{N}$ that a permutation class $C$ is $k$-amalgamable if it is $\pi$-amalgamable for every
$\pi$ of order at most $k$. Furthermore, a permutation class $C$ is amalgamable if it is
$k$-amalgamable for every $k$.

\begin{figure}[h!]
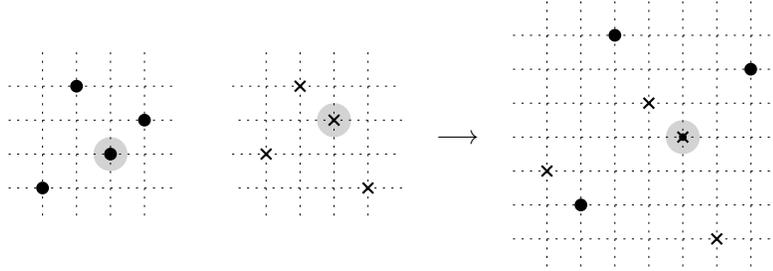

\centering
\begin{displaymath}
\mathfigure{perm-1423-2} \qquad \mathfigure{perm-2431-3} \quad \longrightarrow \quad
\mathfigure{amalgam-3275416}
\end{displaymath}
\caption{One possible 1-amalgamation of 1423 and 2431 with highlighted embeddings of the singleton
permutations is the permutation 3275416.}
\label{fig:1-amalgamation}
\end{figure}

Note that $k$-amalgamability implies $(k-1)$-amalgamability, so we have an infinite number of
increasingly stronger properties. However, the situation is quite simple in the case of the
permutation classes. As shown by Cameron~\citep{Cameron2002}, there are only five infinite
amalgamable classes, the classes $\Av{12}$, $\Av{21}$, the class of all layered permutations
$\Av{231, 312}$, the class of their complements $\Av{213, 132}$ and the class of all permutations.
These are also the only permutation classes that are 3-amalgamable, implying that for any $k\ge 3$,
a permutation class is $k$-amalgamable if and only if it is amalgamable.

In contrast, very little is known about 1-amalgamable and 2-amalgamable permutation classes. In this
paper, we are particularly interested in the 1-amalgamable permutation classes.

\begin{definition}
\label{def:1-amalgamable}
Let $C$ be a permutation class. We say that a permutation $\pi \in C$ is \emph{1-amalgamable in
$C$}, if for every $\tau \in C$ and every prescribed pair of embeddings $f_1$ and $f_2$ of the
singleton permutation 1 into $\pi$ and $\tau$ there is a permutation $\sigma \in C$ and embeddings
$g_1$ and $g_2$ of $\pi$ and $\tau$ into $\sigma$ such that $g_1 \circ f_1 = g_2 \circ f_2$. We use
$A_C$ to denote the set of all 1-amalgamable permutations in $C$.
\end{definition}

Trivially, $A_C$ is a permutation class contained in $C$. Moreover, the properties of $A_C$ are
largely analogous to those of $U_C$, as shown by the next several results.

\begin{observation}
A permutation class $C$ is 1-amalgamable if and only if $A_C=C$.
\end{observation}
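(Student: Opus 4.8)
The plan is to recognize that this equivalence is, at bottom, an unfolding of the two relevant definitions, so that the proof reduces to correctly matching up the quantifiers. First I would recall that a class is $1$-amalgamable exactly when it is $\pi$-amalgamable for every $\pi$ of order at most one, and that the only informative such $\pi$ is the singleton $1$; the empty-permutation case only asserts the joint-embedding property, which will come for free, so I would dispatch it in one line and concentrate on the singleton. The single subtlety I want to flag is a difference in symmetry: in the singleton instance of $\pi$-amalgamability the two inputs $\tau_1,\tau_2\in C$ play interchangeable roles, whereas in Definition~\ref{def:1-amalgamable} one permutation is singled out as the fixed $1$-amalgamable member $\pi$ while the second ranges over all of $C$. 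The whole argument amounts to showing that these two formulations carry the same information.

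For the direction asserting that $1$-amalgamability implies $A_C=C$, I would fix an arbitrary $\pi\in C$ and check directly that $\pi$ is $1$-amalgamable in $C$. Given any $\tau\in C$ and any pair of embeddings $f_1,f_2$ of the singleton into $\pi$ and $\tau$ respectively, I set $\tau_1=\pi$ and $\tau_2=\tau$ and invoke the singleton instance of $\pi$-amalgamability, obtaining $\sigma\in C$ together with embeddings $g_1,g_2$ satisfying $g_1\circ f_1=g_2\circ f_2$. This is precisely the conclusion required by Definition~\ref{def:1-amalgamable}, so $\pi\in A_C$; since $\pi$ was arbitrary and the inclusion $A_C\subseteq C$ is automatic, I conclude $A_C=C$.

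For the converse, assuming $A_C=C$, I would verify that $C$ is $\pi$-amalgamable for the singleton $\pi=1$. Given $\tau_1,\tau_2\in C$ with embeddings $f_1,f_2$ of the singleton, I use that $\tau_1\in C=A_C$, that is, $\tau_1$ is $1$-amalgamable in $C$, and apply Definition~\ref{def:1-amalgamable} to $\tau_1$ in the role of $\pi$ and $\tau_2$ in the role of $\tau$; this yields the desired $\sigma$ and $g_1,g_2$. The empty-permutation requirement then follows immediately, since any $\sigma$ witnessing a singleton amalgamation already contains both $\tau_1$ and $\tau_2$. The main, and essentially the only, obstacle here is purely a matter of bookkeeping: ensuring that the symmetric pair $(\tau_1,\tau_2)$ of the amalgamability definition is consistently identified with the asymmetric pair $(\pi,\tau)$ of the definition of $A_C$, and confirming that quantifying over all of $C$ in the latter loses nothing relative to the former. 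As the two directions are mirror images of this identification, no genuine combinatorial difficulty arises.
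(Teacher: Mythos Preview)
Your proposal is correct. The paper itself gives no proof of this observation, treating it as an immediate consequence of the definitions, and your argument is exactly the definitional unpacking that justifies this: once one identifies the pair $(\tau_1,\tau_2)$ in the definition of $1$-amalgamability with the pair $(\pi,\tau)$ in Definition~\ref{def:1-amalgamable}, the two statements coincide verbatim. One minor remark: in this paper permutations have length at least $1$, so there is no empty permutation and hence no separate ``order~$0$'' case to dispatch; your discussion of it is harmless but unnecessary here.
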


Similarly to $U_C$, the set $A_C$ is closed under the same inflations as the original class~$C$.

\begin{lemma}
Let $C$ be a permutation class. If, for a set of permutations $X$, the class $C$ is closed under
$\cdot[X]$, then $A_C$ is also closed under $\cdot[X]$, and if $C$ is closed under $X[\cdot]$, then
so is $A_C$. Consequently, if $C$ is inflation-closed, then $A_C = C$ and $C$ is 1-amalgamable.
\end{lemma}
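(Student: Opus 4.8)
The plan is to mirror the proof of the analogous statement for $U_C$, establishing the two closure properties separately and then deducing the consequence. Throughout I rely on the facts that $A_C \subseteq C$ and that testing $\pi \in A_C$ means amalgamating $\pi$ against every $\tau \in C$, for every choice of a marked element in each of $\pi$ and $\tau$.

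First I would treat closure under $\cdot[X]$, i.e.\ the implication $C[X]\subseteq C \Rightarrow A_C[X]\subseteq A_C$. Take $\pi \in A_C$ of length $n$ and $\sigma_1,\ldots,\sigma_n \in X$, and set $\pi' := \pi[\sigma_1,\ldots,\sigma_n]$; membership $\pi' \in C$ is immediate from the closure. To check 1-amalgamability, fix any $\tau \in C$ with a marked element and any marked element of $\pi'$; the latter lies in the $i$-th inflated block, say it is the element $e$ of $\sigma_i$. I would first amalgamate the smaller permutation $\pi$, marked at position $i$, with $\tau$: since $\pi \in A_C$ this produces some $\sigma_0 \in C$ with embeddings $g_1\colon \pi \to \sigma_0$ and $g_2\colon \tau \to \sigma_0$ whose marked images coincide at a common element $p = g_1(i)$. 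I then re-inflate $\sigma_0$, inflating each image position $g_1(k)$ by $\sigma_k$ and inflating every remaining position of $\sigma_0$ by an arbitrary fixed element of $X$. The result $\sigma'$ lies in $C[X]\subseteq C$, contains a copy of $\pi'$ on the inflated images of $g_1$, and still contains $\tau$ through $g_2$; the marked element of $\pi'$ now sits at the $e$-th position of the $\sigma_i$-block at $p$, onto which I can route $\tau$'s marked element, so the two markings coincide.

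Second I would treat closure under $X[\cdot]$, i.e.\ $X[C]\subseteq C \Rightarrow X[A_C]\subseteq A_C$. Take $\xi \in X$ of length $n$ and $\pi_1,\ldots,\pi_n \in A_C$, and set $\pi' := \xi[\pi_1,\ldots,\pi_n]$, so that $\pi' \in X[C]\subseteq C$. Given $\tau \in C$ marked and a marked element of $\pi'$ lying in the $i$-th block, say an element $e$ of $\pi_i$, I would amalgamate $\pi_i$, marked at $e$, with $\tau$ using $\pi_i \in A_C$, obtaining $\sigma_0 \in C$ with embeddings agreeing at a common element $p$. Replacing only the $i$-th block, i.e.\ setting $\sigma' := \xi[\pi_1,\ldots,\pi_{i-1},\sigma_0,\pi_{i+1},\ldots,\pi_n]$, keeps us in $X[C]\subseteq C$; it contains $\pi'$ through the copy of $\pi_i$ inside $\sigma_0$ and contains $\tau$ through its embedding into the $\sigma_0$-block, and both marked elements land on the copy of $p$ inside that block.

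Finally, for the consequence I would take $X = C$. The singleton $1$ is trivially 1-amalgamable, so $1 \in A_C$, and inflation-closedness gives $C[C]\subseteq C$, whence by the first part $A_C$ is closed under $\cdot[C]$. Writing any $\pi \in C$ as $1[\pi]$ then shows $\pi \in A_C[C]\subseteq A_C$, so $A_C = C$, and the preceding Observation yields that $C$ is 1-amalgamable. The step I expect to require the most care is the re-inflation in the first part: one must verify that inflating the positions of $\sigma_0$ outside the image of $g_1$ by stray elements of $X$ disturbs neither the embedded copy of $\pi'$ nor that of $\tau$, and that $\tau$'s marked element can genuinely be steered onto the distinguished element $e$ of the $\sigma_i$-block so that the two markings coincide.
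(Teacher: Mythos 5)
Your proof is correct and follows essentially the same strategy as the paper's: in both closure arguments you first amalgamate the relevant inner or outer permutation using its membership in $A_C$, and then rebuild the inflation around the resulting amalgam so that the two marked elements land in the same block and can be identified. The only cosmetic difference is in the $\cdot[X]$ case, where the paper first reduces to a uniform inflation by a single $\rho\in X$ (assuming without loss of generality that $X$ is inflation-closed) and inflates every position of the amalgam by $\rho$, whereas you keep the heterogeneous blocks $\sigma_1,\ldots,\sigma_n$ and pad the positions outside the image of $g_1$ with arbitrary members of $X$ --- both devices serve the same purpose of placing the re-inflated permutation in $C[X]\subseteq C$.
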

\begin{proof}
Suppose that $C$ is closed under $\cdot[X]$. We can assume that $X$ itself is inflation-closed since
if $C$ is closed under $\cdot[X]$, it is also closed under $\cdot[X[X]]$.

Let $\pi \in A_C$ be a 1-amalgamable permutation of order $k$ and let $\rho_1, \ldots, \rho_k$ be
permutations from $X$. Our goal is to prove that $\pi[\rho_1, \ldots, \rho_k]$ also belongs to
$A_C$. We can assume, without loss of generality, that all $\rho_i$ are actually equal to a single
permutation~$\rho$. Otherwise, we could just take $\rho \in X$ that contains every $\rho_i$ (this is
possible since $X$ is inflation-closed) and prove the stronger claim that $\pi[\rho, \ldots, \rho]$
belongs to $A_C$. Let us use $\pi[\rho]$ as a shorthand notation for $\pi[\rho, \ldots, \rho]$.

It is now sufficient to show that $\pi[\rho]$ belongs to $A_C$ for every $\pi \in A_C$ and $\rho \in
X$. Fix a permutation $\tau \in C$ and two embeddings $f_1$ and $f_2$ of the singleton permutation
into $\pi[\rho]$ and $\tau$. We aim to find a permutation $\sigma \in C$ and two embeddings $g_1$
and $g_2$ of $\pi[\rho]$ and $\tau$ into $\sigma$ such that $g_1 \circ f_1 = g_2 \circ f_2$. We can
straightforwardly decompose $f_1$ into an embedding $h_1$ of the singleton permutation into $\pi$,
by simply looking to which inflated block order-isomorphic to $\rho$ the image of $f_1$ belongs, and
an embedding $h_2$ of the singleton permutation into $\rho$, determined by restricting $f_1$ only to
that copy of $\rho$. Since $\pi$ belongs to $A_C$, there is a permutation $\sigma'$ with embeddings
$g_1'$ and $g_2'$ of $\pi$ and $\tau$ such that $g_1' \circ h_1 = g_2' \circ f_2$.

Define $\sigma = \sigma'[\rho]$, and view $\sigma$ as a concatenation of blocks, each a copy of
$\rho$. Let us define mapping $g_1$ by simply using $g_1'$ to map blocks of $\pi[\rho]$ to the
blocks of $\sigma$, each element in $\pi[\rho]$ gets mapped to the same element of the corresponding
copy of $\rho$ in $\sigma$. Then define mapping $g_2$ by using $g_2'$ to map its elements to the
blocks of $\sigma$ and then within the copy of $\rho$ to the single element in the image of $h_2$.
It is easy to see that $g_1$ and $g_2$ are in fact embeddings of $\pi[\rho]$ and $\tau$ into
$\sigma$. Also the images of $g_1 \circ f_1$ and $g_2 \circ f_2$ must lie in the same block of
$\sigma$. And finally these images must be equal since we used $h_2$ to place the single element
from the image of $g_2$ inside each block of $\sigma$.

We now show that if $C$ is closed under $X[\cdot]$ then so is $A_C$. Fix a permutation $\rho \in X$
of order $k$, and a $k$-tuple $\pi_1, \ldots, \pi_k$ of permutations from $A_C$. We will show that
$\rho[\pi_1, \ldots , \pi_k]$ belongs to $A_C$.

Fix a permutation $\tau \in C$ and two embeddings $f_1$ and $f_2$ of the singleton permutation into
$\rho[\pi_1, \ldots , \pi_k]$ and $\tau$. We aim to find a permutation $\sigma \in C$ and two
embeddings $g_1$ and $g_2$ of $\rho[\pi_1, \ldots , \pi_k]$ and $\tau$ into $\sigma$ such that $g_1
\circ f_1 = g_2 \circ f_2$. We again view $\rho[\pi_1, \ldots , \pi_k]$ as a concatenation of $k$
blocks, the $i$-th block being order-isomorphic to $\pi_i$. Suppose that the image of $f_1$ is in
the $j$-th block. Let us decompose $f_1$ into an embedding $h_1$ of the singleton permutation into
$\rho$ whose image is the $j$-th element of $\rho$, and an embedding $h_2$ of the singleton
permutation into $\pi_j$. Since $\pi_j$ belongs to $A_C$, there is a permutation $\sigma'$ with
embeddings $g_1'$ and $g_2'$ of $\pi_j$ and $\tau$ such that $g_1' \circ h_2 = g_2' \circ f_2$.

Define $\sigma = \rho[\pi_1, \ldots, \pi_{j-1} , \sigma', \pi_{j+1}, \ldots, \pi_k]$ and let us
define mapping $g_1$ in the following way. Every block of $\rho[\pi_1, \ldots , \pi_k]$ except for
the $j$-th one gets mapped to the corresponding block of $\sigma$, and the $j$-th block is mapped
using the embedding $g_1'$ to the $j$-th block of $\sigma$. Then define mapping $g_2$ simply by
mapping $\tau$ to the $j$-th block of $\sigma$ using $g_2'$. It is easy to see that both $g_1$ and
$g_2$ are in fact embeddings of $\rho[\pi_1, \ldots , \pi_k]$ and $\tau$ into $\sigma$. Furthermore,
the images of $g_1 \circ f_1$ and $g_2 \circ f_2$ both lie in the $j$-th block of $\sigma$. Their
equality then follows from the construction since $g_1' \circ h_2 = g_2' \circ f_2$.

It remains to show that if $C$ is inflation-closed then $A_C = C$. But if $C$ is inflation-closed,
then it is closed under $\cdot[C]$, so $A_C$ is also closed under $\cdot[C]$. And since $A_C$
trivially contains the singleton permutation, for every $\pi \in C$ we have that $\pi = 1[\pi]$ also
belongs to~$A_C$.
\end{proof}

As noted by Jelínek and Valtr~\citep[Lemma 1.5]{Jelinek2015}, it follows from the results of
Nešetřil~\citep{Nesetril1989} that if a permutation class $C$ is unsplittable then $C$ is also
1-amalgamable. Using the same argument, we get the following stronger proposition relating the
classes $U_C$ and $A_C$.

\begin{proposition}
Let $C$ be a permutation class, then $U_C \subseteq A_C$.
\end{proposition}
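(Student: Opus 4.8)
The plan is to prove the contrapositive: if $\pi \notin A_C$, then $\pi \notin U_C$. Suppose $\pi$ is not $1$-amalgamable in $C$. By Definition~\ref{def:1-amalgamable} there is a permutation $\tau \in C$ together with marked indices $p$ in $\pi$ and $q$ in $\tau$ (the images of the two embeddings of the singleton) such that no $\sigma \in C$ admits embeddings $g_1$ of $\pi$ and $g_2$ of $\tau$ with $g_1(p) = g_2(q)$. Since a fixed position in a permutation determines the corresponding element, this says precisely that no $\sigma \in C$ contains both a copy of $\pi$ and a copy of $\tau$ whose marked elements are one and the same element of $\sigma$. I will use this $\tau$ to build, uniformly over all of $C$, a splitting witnessing that $\pi$ is avoidable.

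For each $\sigma \in C$, call a point $s$ of $\sigma$ \emph{$\pi$-markable} if some copy of $\pi$ in $\sigma$ has its marked element (the image of position $p$) equal to $s$, and define \emph{$\tau$-markable} analogously with $q$. The coloring I propose is to color $s$ red if it is $\pi$-markable and blue otherwise. The two things to check are that this coloring has no blue copy of $\pi$ and no red copy of $\tau$. The first is immediate: the marked element of any copy of $\pi$ is by definition $\pi$-markable, hence red, so no copy of $\pi$ can be entirely blue. The second is where the non-amalgamability assumption enters: if some copy of $\tau$ were entirely red, its marked element $s$ would be simultaneously $\tau$-markable (it is the mark of this $\tau$-copy) and, being red, $\pi$-markable; but a point that is both $\pi$-markable and $\tau$-markable in $\sigma$ is exactly the shared element of an amalgam inside $\sigma \in C$, contradicting the choice of $\tau$.

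It follows that every $\sigma \in C$ decomposes as a merge of a $\tau$-avoiding (red) subsequence and a $\pi$-avoiding (blue) subsequence, that is, $C \subseteq \Av{\tau} \odot \Av{\pi}$. This is a splitting of $C$ with one part equal to $\Av{\pi}$, and it is nontrivial: since $\pi, \tau \in C$ we have $\pi \notin \Av{\pi}$ and $\tau \notin \Av{\tau}$, so neither part is a superset of $C$. By the characterization of unavoidability noted earlier (a permutation is unavoidable in $C$ if and only if $C$ has no nontrivial splitting into two parts with one part $\Av{\pi}$), this shows $\pi \notin U_C$, completing the contrapositive and hence the proposition.

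I expect the only genuinely delicate point to be isolating the correct coloring: the colors must be defined intrinsically from the single pattern $\pi$ (via $\pi$-markability) rather than from any particular witness permutation, so that the same rule applies simultaneously to every $\sigma \in C$ and yields a splitting of the whole class. Once the coloring is chosen this way, the two avoidance checks and the verification of nontriviality are short, and the argument needs no explicit appeal to the external Ramsey-theoretic machinery that underlies the global statement for unsplittable classes.
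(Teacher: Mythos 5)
Your proof is correct and rests on exactly the same key idea as the paper's: coloring an element of $\sigma$ red precisely when some embedding of $\pi$ sends the marked position there, then observing that this coloring kills both a blue copy of $\pi$ and a red copy of $\tau$. You merely package the argument contrapositively (exhibiting the explicit splitting $C \subseteq \Av{\tau} \odot \Av{\pi}$ via the characterization of unavoidability stated after Definition~\ref{def:unavoidable}), whereas the paper argues directly by applying the same coloring to the single unavoidability witness $\sigma$ and deriving a contradiction.
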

\begin{proof}
Let $\pi$ be an unavoidable permutation in $C$ and let $\tau$ be a permutation from $C$.  By the
definition of $U_C$, there is a permutation $\sigma \in C$ such that any red-blue coloring of
$\sigma$ has a red copy of $\tau$ or a blue copy of $\pi$.  We claim that $\sigma$ contains every
1-amalgamation of $\pi$ and $\tau$.  Suppose for a contradiction that there are two embeddings $f_1$
and $f_2$ of the singleton permutation 1 into $\pi$ and $\tau$ such that there are no embeddings
$g_1$ and $g_2$ of $\pi$ and $\tau$ into $\sigma$ that would satisfy $g_1 \circ f_1 = g_2 \circ
f_2$.

Let $f_1(1) = a$ and $f_2(1) = b$.  We aim to color the elements of $\sigma$ to avoid both a red
copy of $\tau$ and a blue copy of $\pi$.  We color an element $\sigma_i$ red if and only if there is
an embedding of $\pi$ which maps $\pi_a$ to $\sigma_i$.  Trivially, we cannot obtain a blue copy of
$\pi$, since we must have colored the image of $\pi_a$ red.  On the other hand, suppose we obtained
a red copy of $\tau$.  Then the image of $\tau_b$ was painted red which means that there is an
embedding of $\pi$ which maps $\pi_a$ to the same element.  We assumed that such a pair of
embeddings does not exist, therefore we defined a coloring of $\sigma$ that contains neither a red
copy of $\tau$ nor a blue copy of $\pi$.
\end{proof}

\section{Left-to-right minima}
\label{sec:LR inflations}
We say that the element $\pi_i$ \emph{covers the element} $\pi_j$ if $i < j$ and simultaneously
$\pi_i < \pi_j$. The $i$-th element of a permutation $\pi$ is then a \emph{left-to-right minimum},
or shortly LR-minimum, if it is not covered by any other element.

Similarly we could define LR-maxima, RL-minima and RL-maxima.  However we can easily translate
between right-to-left and left-to-right orientation by looking at the reverses of the permutations,
and similarly between maxima and minima by looking at the complements of the permutations. Therefore
we restrict ourselves to dealing only with LR-minima from now on.

\begin{definition}
Suppose that $\pi \in S_n$ is a permutation with $k$ LR-minima and let $\sigma_1, \ldots, \sigma_k$
be a $k$-tuple of non-empty permutations. The \emph{LR-inflation} of $\pi$ by the sequence
$\sigma_1, \ldots, \sigma_k$ is the permutation resulting from the inflation of the LR-minima of
$\pi$ by $\sigma_1, \ldots, \sigma_k$. We denote this by $\pi\langle\sigma_1, \ldots,
\sigma_k\rangle$.
\end{definition}

\begin{figure}[h!]
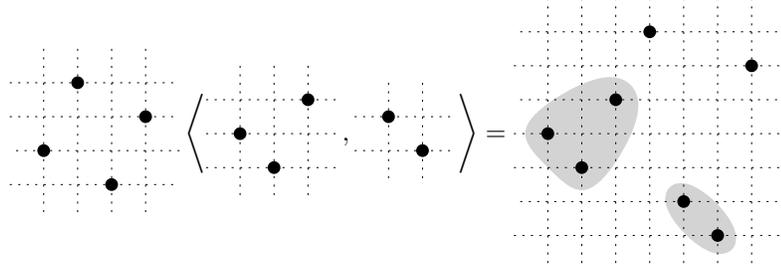

\centering
\begin{displaymath}
\mathfigure{perm-2413}\left \langle \mathfigure{perm-213}, \mathfigure{perm-21} \right\rangle =
\mathfigure{lrinflated-2413}
\end{displaymath}
\caption{An example of LR-inflation: $2413\langle213, 21\rangle = 4357216$.}
\label{fig:LR-inflation}
\end{figure}

\begin{definition}
We say that a permutation class $C$ is \emph{closed under LR-inflations} if for every $\pi\in C$
with $k$ LR-minima, and for every $k$-tuple $\sigma_1,\dotsc,\sigma_k$ of permutations from $C$, the
LR-inflation $\pi\langle\sigma_1,\dotsc,\sigma_k\rangle$ belongs to~$C$. The \emph{closure of $C$
under LR-inflations}, denoted $\LRcl{C}$, is the smallest class which contains $C$ and is closed
under LR-inflations.
\end{definition}

Recall that one can characterize inflation-closed classes by a basis that consists of simple
permutations. We can derive a similar characterization in the case of classes closed under
LR-inflations. We say that a permutation is \emph{LR-simple} if it cannot be obtained by
LR-inflations except for the trivial ones. Using the same arguments, it is easy to see that a
permutation class is closed under LR-inflations if and only if every permutation in its basis is
LR-simple.

\subsection{LR-splittability}
\label{sec:LR-splittability}
We aim to define a stronger version of splittability that would help us connect the properties of
permutation classes and their LR-closures. A natural way to do that is to consider an operation
similar to the regular merge, with LR-minima being shared between both parts.

\begin{definition}
We say that a permutation $\pi$ is a \emph{LR-merge} of permutations $\tau$ and $\sigma$, if its non
LR-minimal elements can be partitioned into two disjoint sequences, such that one of them is,
together with the sequence of LR-minima of $\pi$, an occurrence of $\tau$, and the other is,
together with the sequence of LR-minima of $\pi$, an occurrence of $\sigma$. For two permutation
classes $A$ and $B$, we write $A \LRdot B$ for the class of all LR-merges of a permutation from $A$
with a compatible permutation from $B$. Trivially, $A \LRdot B$ is again a permutation class.
\end{definition}

\begin{figure}[h!]
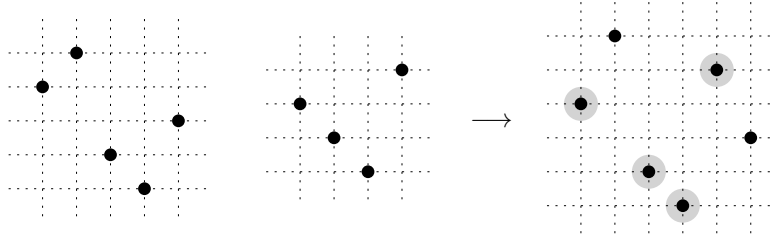

\centering
\begin{displaymath}
\mathfigure{perm-45213} \qquad \mathfigure{perm-3214} \quad \longrightarrow \quad
\mathfigure{lrmerge-462153}
\end{displaymath}
\caption{For example one possible LR-merge of 45213 and 3214 is the permutation 462153. The
corresponding embedding of 3214 is indicated.}
\label{fig:LR-merge}
\end{figure}

Note that we can also look at LR-merges as a special red-blue colorings of permutations in which the
LR-minima are both blue and red at the same time. Naturally we can use this definition of LR-merge
to define LR-splittability in the same way that the concept of regular merge gives rise to the
definition of splittability.

\begin{definition}
We say that a multiset of permutation classes $\lbrace P_1, \ldots, P_m\rbrace$ forms a
\emph{LR-splitting} of a permutation class $C$ if $C \subseteq P_1 \LRdot \cdots \LRdot P_m$. We
call $P_i$ the \emph{parts} of the LR-splitting. The LR-splitting is \emph{nontrivial} if none of
its parts is a superset of $C$, and the LR-splitting is \emph{irredundant} if no proper submultiset
of $\lbrace P_1, \ldots, P_m\rbrace$ forms an LR-splitting of $C$. A permutation class $C$ is then
\emph{LR-splittable} if $C$ admits a nontrivial LR-splitting.
\end{definition}

Clearly, every LR-splittable class is splittable. Moreover, some properties of LR-splittability are
analogous to the properties of splittability, as shown by the following lemma. We omit the proof as
it uses the very same (and easy) arguments as the proof of Lemma \ref{lemma:splittable conditions}.

\begin{lemma}
\label{lemma:LR-splittable conditions}
For a class $C$ of permutations, the following properties are equivalent:
\begin{enumerate}[(a)]
  \item $C$ is LR-splittable.
  \item $C$ has a nontrivial LR-splitting into two parts.
  \item $C$ has an LR-splitting into two parts, in which each part is a proper subclass of $C$.
  \item $C$ has a nontrivial LR-splitting into two parts, in which each part is a principal class.
\end{enumerate}
\end{lemma}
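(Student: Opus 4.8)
The plan is to prove the cycle of implications $(a)\Rightarrow(b)\Rightarrow(c)\Rightarrow(d)\Rightarrow(a)$, following the same template as Lemma~\ref{lemma:splittable conditions}. The directions $(b)\Rightarrow(c)$, $(c)\Rightarrow(d)$ and $(d)\Rightarrow(a)$ transfer almost verbatim from the unprimed setting, since they rely only on two features shared by $\LRdot$ and $\odot$: the parts of an LR-merge occur as genuine subpermutations of the merged permutation (so they inherit membership in any permutation class), and $\LRdot$ is monotone in each argument. The one genuinely new ingredient, needed for $(a)\Rightarrow(b)$, is the associativity of $\LRdot$, which lets us regroup a nontrivial LR-splitting with many parts into one with two parts.

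So first I would isolate the structural fact on which associativity rests: \emph{if $\mu$ is obtained from $\pi$ by deleting some elements that are not LR-minima of $\pi$, then $\mu$ has exactly the same LR-minima as $\pi$.} Indeed, every non-LR-minimum of $\pi$ has a strictly smaller LR-minimum somewhere to its left (namely the running minimum at that position), and that LR-minimum is never deleted; hence no element can be promoted to a new LR-minimum, while each old LR-minimum plainly stays one. Because in any LR-merge the shared LR-minima are retained in both parts and only non-LR-minima are distributed, this fact shows that when we peel off one part of an $m$-fold LR-merge the remaining permutation keeps the very same distinguished set of LR-minima. Consequently $P_1 \LRdot(P_2 \LRdot \cdots \LRdot P_m)$ and the flat $m$-fold LR-merge $P_1 \LRdot \cdots \LRdot P_m$ coincide, and $\LRdot$ is associative.

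With associativity in hand, $(a)\Rightarrow(b)$ goes as follows. Given a nontrivial LR-splitting of $C$, I would first pass to an irredundant sub-splitting $\{P_1,\dotsc,P_m\}$; it is still nontrivial, and $m\ge 2$ because a single part containing $C$ would violate nontriviality. Setting $Q_1=P_1$ and $Q_2 = P_2 \LRdot \cdots \LRdot P_m$, associativity gives $C \subseteq Q_1 \LRdot Q_2$, while $Q_2\not\supseteq C$ follows from irredundancy (otherwise $\{P_2,\dotsc,P_m\}$ would already be an LR-splitting). For $(b)\Rightarrow(c)$ I would replace each part $P_i$ by $C\cap P_i$: the LR-merge parts of any $\pi\in C$ are subpermutations of $\pi$ and thus lie in $C$, so $C\subseteq (C\cap P_1)\LRdot(C\cap P_2)$, and each $C\cap P_i$ is a proper subclass since nontriviality supplies a permutation of $C$ outside $P_i$. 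For $(c)\Rightarrow(d)$, from each proper subclass $P_i\subsetneq C$ I pick any $\pi_i\in C\setminus P_i$; downward closure of $P_i$ forces $P_i\subseteq \Av{\pi_i}$, so monotonicity of $\LRdot$ yields $C\subseteq \Av{\pi_1}\LRdot\Av{\pi_2}$, and this splitting is nontrivial because $\pi_i\in C\setminus\Av{\pi_i}$. Finally $(d)\Rightarrow(a)$ is immediate, as a nontrivial two-part principal LR-splitting is in particular a nontrivial LR-splitting.

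The main obstacle I anticipate is precisely the associativity step, specifically the temptation to worry that deleting one part of an LR-merge could create spurious new LR-minima that would then have to be shared among the remaining parts. The LR-minima stability fact above is exactly what rules this out, and pinning it down cleanly is the crux; once it is established, every other implication is a routine adaptation of the proof of Lemma~\ref{lemma:splittable conditions}.
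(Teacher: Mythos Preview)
Your proof is correct and follows exactly the approach the paper intends: the paper omits the proof entirely, saying it ``uses the very same (and easy) arguments as the proof of Lemma~\ref{lemma:splittable conditions}'', and your cycle $(a)\Rightarrow(b)\Rightarrow(c)\Rightarrow(d)\Rightarrow(a)$ is precisely that transfer. The one point you treat more carefully than the paper does is the associativity of $\LRdot$, via the observation that deleting non-LR-minima from a permutation leaves the set of LR-minima unchanged; this is indeed the only place where the LR-setting requires a moment's thought beyond the ordinary-merge case, and your justification is correct.
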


Now we can state some of the results connecting splittability and LR-splittability of permutation
classes and their LR-closures.
\begin{proposition}
\label{prop:split of LR-closed}
Let $C$ be a permutation class that is closed under $LR$ inflations. Then $C$ is splittable if and
only if $C$ is LR-splittable.
\end{proposition}
\begin{proof}
Trivially, LR-splittability implies splittability since we can take the corresponding red-blue
coloring and simply assign an arbitrary color to each of the LR-minima. Now suppose that $C$ admits
splitting $\lbrace D, E \rbrace$ for some proper subclasses $D$ and $E$. We aim to prove that also
$C \subseteq D \LRdot E$. Let us first show that $C$ contains a permutation $\tau$ that belongs
neither to $D$ nor to~$E$. From the definition of splittability, there are permutations $\tau_D \in
C \setminus D$ and $\tau_E \in C \setminus E$. Define $\tau$ as the LR-inflation of $\tau_D$ with
$\tau_E$, which clearly lies outside both subclasses $D$ and $E$.

Let us suppose that there is some $\pi \in C$ not belonging to $D \LRdot E$ , i.e., there is no
red-blue coloring of $\pi$ which proves it is an LR-merge of a permutation $\alpha \in D$ and a
permutation $\beta \in E$. Let $\pi'$ be the permutation created by inflating each LR-minimum of
$\pi$ with $\tau$. Since $\pi'$ belongs to $C$, it has a regular red-blue coloring with the
permutation corresponding to the red elements $\pi'_R \in D$ and the permutation corresponding to
the blue elements $\pi'_B \in E$. However there must be both colors in each block created by
inflating a LR-minimum of $\pi$ with $\tau$, and therefore there is a valid red-blue coloring of
$\pi$ that assigns both colors to the LR-minima.
\end{proof}

Finally, we want to show that, under modest assumptions, the LR-splittability of a permutation class
implies the LR-splittability (and thus the splittability) of its LR-closure.

\begin{proposition}
\label{prop:LR-split split}
If $C$, $D$ and $E$ are permutation classes satisfying $C \subseteq D \LRdot E$, then $\LRcl{C}
\subseteq \LRcl{D}\LRdot\LRcl{E}$. Consequently, if neither $\LRcl{D}$ nor $\LRcl{E}$ contain the
whole class $C$, then its closure $\LRcl{C}$ is LR-splittable into parts $\LRcl{D}$ and $\LRcl{E}$.
\end{proposition}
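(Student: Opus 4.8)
The plan is to show that the operation $\LRdot$ preserves closure under LR-inflations, and then to invoke the minimality of $\LRcl{C}$. Concretely, I would first record that $\LRdot$ is monotone, so from $C \subseteq D \LRdot E$ together with $D \subseteq \LRcl{D}$ and $E \subseteq \LRcl{E}$ we immediately get $C \subseteq \LRcl{D} \LRdot \LRcl{E}$. The whole statement then reduces to the claim that whenever $A$ and $B$ are closed under LR-inflations, so is $A \LRdot B$: applying this with $A = \LRcl{D}$ and $B = \LRcl{E}$ shows that $\LRcl{D} \LRdot \LRcl{E}$ is a class closed under LR-inflations that contains $C$, and since $\LRcl{C}$ is by definition the smallest such class, $\LRcl{C} \subseteq \LRcl{D} \LRdot \LRcl{E}$ follows.

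Before proving the key claim I would isolate two structural facts about LR-minima. First, if $\pi_i$ is an LR-minimum of $\pi$ and we inflate it by a permutation $\gamma$, then inside the resulting block an element is an LR-minimum of the whole inflation exactly when it corresponds to an LR-minimum of $\gamma$ (everything lying to the left of the block is larger, because $\pi_i$ was an LR-minimum, so only earlier elements within the block can witness non-minimality); moreover every non-LR-minimum of $\pi$ stays a non-LR-minimum. Hence the LR-minima of an LR-inflation are precisely the images of the LR-minima of the inflating factors. Second, if $\pi$ is an LR-merge with red part $\alpha$ and blue part $\beta$, then the LR-minima of $\alpha$ (and of $\beta$) coincide with the LR-minima of $\pi$: each LR-minimum of $\pi$ lies in both $\alpha$ and $\beta$ and is smaller than all earlier elements, hence stays an LR-minimum there, while any other element of $\alpha$ is, already in $\pi$, dominated by the most recent preceding LR-minimum, which also belongs to $\alpha$.

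With these in hand I would prove the key claim. Take $\pi \in A \LRdot B$ with $k$ LR-minima, fix its LR-merge coloring with $\alpha \in A$ and $\beta \in B$, and let $\gamma_1, \dots, \gamma_k \in A \LRdot B$ together with LR-merge colorings giving $\alpha^{(t)} \in A$ and $\beta^{(t)} \in B$. I color $\pi' = \pi\langle\gamma_1, \dots, \gamma_k\rangle$ by keeping the color of each non-inflated element from the coloring of $\pi$, and coloring each block according to the coloring of the corresponding $\gamma_t$. By the first structural fact the LR-minima of $\pi'$ are exactly the LR-minima of the blocks, which are the shared both-colored elements, so this is a legitimate LR-merge coloring. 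It then remains to identify the red part: using that the LR-minima of $\alpha$ equal those of $\pi$, the red subsequence of $\pi'$ consists of the non-LR-minima of $\alpha$ in their original relative positions together with, in the slot of the $t$-th LR-minimum, a copy of $\alpha^{(t)}$; that is, the red part is exactly $\alpha\langle\alpha^{(1)}, \dots, \alpha^{(k)}\rangle$. Since $A$ is closed under LR-inflations this lies in $A$, and symmetrically the blue part $\beta\langle\beta^{(1)}, \dots, \beta^{(k)}\rangle$ lies in $B$, so $\pi' \in A \LRdot B$.

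I expect this last identification to be the one point needing care: one must check that the red subsequence of $\pi'$ really is the LR-inflation of $\alpha$ and not merely order-isomorphic to something similar. The argument is that a non-inflated red element of $\pi$ never has its value inside the contiguous value range occupied by a block, so its position relative to each block in $\pi'$ is determined by its position relative to the corresponding LR-minimum in $\alpha$; combined with $\alpha^{(t)}$ being order-isomorphic to the red part of $\gamma_t$, this forces the claimed order-isomorphism. Finally, for the \emph{Consequently} clause: $\{\LRcl{D}, \LRcl{E}\}$ is an LR-splitting of $\LRcl{C}$ by the main statement, and it is nontrivial precisely because neither part is a superset of $C$ (if, say, $\LRcl{D} \supseteq \LRcl{C}$ then $\LRcl{D} \supseteq C$, contrary to assumption), so $\LRcl{C}$ is LR-splittable.
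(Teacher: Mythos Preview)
Your proof is correct, and the technical heart---the coloring construction in which non-inflated elements of $\pi$ keep their color while each inflated block inherits the coloring of the corresponding $\gamma_t$, together with the observation that each monochromatic part is then itself an LR-inflation---is precisely what the paper does. The difference is organizational rather than mathematical. The paper argues by structural induction directly on elements of $\LRcl{C}$: the base case treats permutations that cannot be obtained as nontrivial LR-inflations (and asserts these lie in $C$), and the inductive step carries out the coloring construction. You instead isolate the standalone lemma that $A\LRdot B$ is closed under LR-inflations whenever $A$ and $B$ are, and then invoke the defining minimality of $\LRcl{C}$. Your packaging is a little cleaner---it gives a reusable closure statement and sidesteps the need to justify that every LR-simple member of $\LRcl{C}$ already lies in $C$---while the paper's version is more direct but tied to this specific proposition. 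Your two structural facts about LR-minima (how they behave under LR-inflation, and that the LR-minima of the red part of an LR-merge coincide with those of the ambient permutation) are exactly the ingredients implicit in the paper's sentence ``the permutation $\pi_R$ is an LR-inflation of the red elements of $\alpha$ by the red elements of the permutations $\beta_i$''; making them explicit is a reasonable choice.
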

\begin{proof}
We will inductively construct a valid red-blue coloring which proves that $\LRcl{C} \subseteq
\LRcl{D} \LRdot \LRcl{E}$. First, any permutation in $\LRcl{C}$ that cannot be obtained from shorter
permutations using LR-inflations must belong to $C$ and we simply use the red-blue coloring that
witnesses the inclusion $C\subseteq D\LRdot E$.

Now take $\pi \in \LRcl{C}$ that can be obtained by LR-inflation from shorter permutations as $\pi =
\alpha\langle\beta_1, \ldots, \beta_k\rangle$. We can already color the permutation $\alpha$ and all
the permutations $\beta_i$ and we construct a coloring of $\pi$ in the following way: color the
inflated blocks $\beta_i$ according to the coloring of $\beta_i$ and the remaining uninflated
elements of $\alpha$ get the color according to the coloring of $\alpha$. It remains to show that
the permutation $\pi_R$ corresponding to the red elements of $\pi$ belongs to $\LRcl{D}$ and the
permutation  $\pi_B$ corresponding to the blue elements of $\pi$ belongs to $\LRcl{E}$. Since the
LR-minima of $\alpha$ are both red and blue, the permutation $\pi_R$ is an LR-inflation of the red
elements of $\alpha$ by the red elements of the permutations $\beta_i$. All these permutations
belong to $\LRcl{D}$ and thus their LR-inflation also belongs to $\LRcl{D}$. Using the very same
argument we can show that $\pi_B$ belongs to $\LRcl{E}$.

It remains to show that the splitting of $\LRcl{C}$ into $\LRcl{D}$ and $\LRcl{E}$ is nontrivial.
However that follows from the assumption that neither $\LRcl{D}$ nor $\LRcl{E}$ contain the whole
class~$C$.
\end{proof}

\subsection{LR-amalgamability}

Similarly to the situation with LR-splittability we want to describe a property of permutation
classes which would imply 1-amalgamability of their respective LR-closures.

\begin{definition}
We say that a permutation class $C$ is \emph{$LR$-amalgamable} if for any two permutations $\tau_1,
\tau_2 \in C$ and any two mappings $f_1$ and $f_2$, where $f_i$ is an embedding of the singleton
permutation into $\tau_i$ and its image is not an LR-minimum of $\tau_i$, there is a permutation
$\sigma \in C$ and two mappings $g_1$ and $g_2$ such that $g_i$ is an embedding of $\tau_i$ into
$\sigma$, $g_1 \circ f_1 = g_2 \circ f_2$ and moreover $g_i$ preserves the property of being a
LR-minimum.
\end{definition}

Observe that LR-amalgamability does not imply 1-amalgamability since it does not guarantee
1-amalgamation over LR-minima and conversely, 1-amalgamability does not imply LR-amalgamability
because it may not preserve the property of being an LR-minimum. However, we can at least prove that
LR-amalgamability implies 1-amalgamability for classes that are closed under LR-inflations. Recall
that we actually derived equivalence between LR-splittability and splittability in Proposition
\ref{prop:split of LR-closed}. 

\begin{lemma}
\label{lemma:amalgamability of LR-closed}
Let $C$ be a permutation class that is closed under $LR$ inflations. If $C$ is LR-amalgamable then
$C$ is also 1-amalgamable.
\end{lemma}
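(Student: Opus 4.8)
The plan is to fix arbitrary $\pi,\tau\in C$ together with embeddings $f_1,f_2$ of the singleton into $\pi$ and $\tau$, write $\pi_a=f_1(1)$ and $\tau_b=f_2(1)$ for the two marked elements, and produce the required amalgam $\sigma$ by splitting into cases according to whether these marked elements are LR-minima. The guiding idea is that a marked LR-minimum can be amalgamated ``for free'' using closure under LR-inflations, while a pair of marked non-LR-minima is exactly the situation that LR-amalgamability is designed to handle; between them the two mechanisms cover every possible marked pair, so that every $\pi\in C$ turns out to be 1-amalgamable in $C$, i.e.\ $A_C=C$.

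First I would treat the case where $\pi_a$ is an LR-minimum of $\pi$ (the case where $\tau_b$ is an LR-minimum of $\tau$ being symmetric). Here I set $\sigma$ to be the LR-inflation of $\pi$ obtained by inflating $\pi_a$ by $\tau$ and leaving every other element unchanged; since $C$ is closed under LR-inflations and $\tau\in C$, we get $\sigma\in C$. Viewing $\sigma$ as $\pi$ with the slot of $\pi_a$ replaced by a block that is a verbatim copy of $\tau$, I would let $g_2$ map $\tau$ onto this block, so that $g_2(\tau_b)$ is the element of the block in position $b$, and let $g_1$ send $\pi_i$ to itself for $i\neq a$ and send $\pi_a$ to that same position-$b$ element of the block. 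Because the block occupies exactly the position-and-value window formerly held by $\pi_a$, collapsing it to any single element recovers a genuine occurrence of $\pi$; choosing the position-$b$ element yields $g_1\circ f_1=g_2\circ f_2$, as required.

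In the remaining case neither $\pi_a$ nor $\tau_b$ is an LR-minimum, so both marked elements are non-LR-minima and I can invoke the LR-amalgamability of $C$ directly: it furnishes $\sigma\in C$ and embeddings $g_1,g_2$ of $\pi,\tau$ with $g_1\circ f_1=g_2\circ f_2$. The extra guarantee that $g_1,g_2$ preserve LR-minimality is stronger than 1-amalgamability needs, so I would simply discard it. Combining the two cases proves $A_C=C$.

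The only step needing genuine care is the embedding verification in the inflation case, namely that sending $\pi_a$ into the $\tau$-block still yields a copy of $\pi$; this holds because any inflated block occupies the position-and-value window of the element it replaces. The role of the LR hypothesis is of a different nature: it is what makes $\sigma\in C$ legitimate, since $C$ is assumed closed only under inflations of LR-minima, so the construction is available precisely when the marked element is an LR-minimum. I do not expect a serious obstacle here — the main point to articulate clearly is the clean dichotomy that closure under LR-inflations disposes of marked LR-minima while LR-amalgamability disposes of marked non-LR-minima, and that these two cases are exhaustive.
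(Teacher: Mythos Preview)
Your proposal is correct and follows essentially the same approach as the paper: both proofs split into the case where at least one marked element is an LR-minimum (handled by LR-inflating that minimum by the other permutation, using closure under LR-inflations) and the case where neither is (handled directly by LR-amalgamability). Your write-up is in fact more detailed than the paper's in specifying the embeddings $g_1,g_2$ in the inflation case.
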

\begin{proof}
Let $\pi_1$ and $\pi_2$ be arbitrary permutations from $C$ and $f_1$, $f_2$ embeddings of the
singleton permutation into $\pi_1$ and $\pi_2$ respectively. If neither of the images of $f_1$ and
$f_2$ is an LR-minimum of the respective permutation we obtain their 1-amalgamation directly since
$C$ is LR-amalgamable.

Now we can assume without loss of generality that the single element in the image of $f_1$ is a
LR-minimum of $\pi_1$. We can create the resulting 1-amalgamation by simply inflating this
LR-minimum by the permutation $\pi_2$. It is then easy to derive the mappings $g_1$ and $g_2$ that
show it is the desired 1-amalgamation.
\end{proof}

We conclude this section by relating LR-amalgamability of a permutation class and 1-amalgamability
of its LR-closure.

\begin{proposition}
\label{prop:LR-am 1-am}
If a permutation class $C$ is LR-amalgamable then its LR-closure $\LRcl{C}$ is LR-amalgamable and
thus also 1-amalgamable.
\end{proposition}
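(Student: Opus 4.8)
The second assertion requires no real work: by the definition of the LR-closure, $\LRcl{C}$ is closed under LR-inflations, so once we know $\LRcl{C}$ is LR-amalgamable, Lemma~\ref{lemma:amalgamability of LR-closed} immediately upgrades this to 1-amalgamability. Thus the entire content is to show that LR-amalgamability passes from $C$ to $\LRcl{C}$. The plan is to prove this by strong induction on $|\tau_1|+|\tau_2|$, where $\tau_1,\tau_2\in\LRcl{C}$ are the permutations to be amalgamated over prescribed elements $x_1,x_2$ that are non-LR-minima of $\tau_1,\tau_2$ respectively.

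If both $\tau_1,\tau_2\in C$, I would simply invoke the LR-amalgamability of $C$; since every LR-simple element of $\LRcl{C}$ already lies in $C$, this is exactly the situation reached when no further decomposition is possible, so it serves as the base case. Otherwise at least one permutation, say $\tau_1$, lies in $\LRcl{C}\setminus C$ and is therefore not LR-simple, and I would fix a nontrivial decomposition $\tau_1=\alpha\langle\beta_1,\dots,\beta_k\rangle$. Both $\alpha$ and each $\beta_i$ are patterns of $\tau_1$, hence belong to $\LRcl{C}$ and are strictly shorter. The structural fact I would establish first is that the LR-minima of an LR-inflation $\alpha\langle\beta_1,\dots,\beta_k\rangle$ are precisely the images of the LR-minima of the blocks $\beta_i$, while its non-LR-minima split into the non-LR-minima of the skeleton $\alpha$ together with the non-LR-minima inside the blocks. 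Consequently the marked element $x_1$ falls into exactly one of two cases: either (A) $x_1$ is a non-LR-minimum of $\alpha$, or (B) $x_1$ lies inside some block $\beta_j$ and is a non-LR-minimum of $\beta_j$.

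In case (A) I would apply the induction hypothesis to the strictly smaller pair $(\alpha,\tau_2)$, obtaining an amalgam $\sigma'\in\LRcl{C}$ with LR-minimum-preserving embeddings $g_1',g_2'$ agreeing on the marked point, and then recover the blocks by LR-inflating $\sigma'$: each LR-minimum $g_1'(\ell_i)$ (the image of the $i$-th LR-minimum $\ell_i$ of $\alpha$) is inflated by $\beta_i$ and every other LR-minimum of $\sigma'$ by a singleton, which keeps the result in $\LRcl{C}$. Case (B) is analogous, applying the induction hypothesis to $(\beta_j,\tau_2)$ and re-inflating the $j$-th cell of $\alpha$ by the resulting amalgam. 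In each case the embedding of $\tau_1$ combines $g_1'$ on the skeleton with the identity on the blocks, and the embedding of $\tau_2$ is read off from $g_2'$. The hard part will be checking that these reassembled maps are genuine embeddings that preserve LR-minimality: an element of $\tau_2$ that $g_2'$ sends onto a now-inflated LR-minimum of $\sigma'$ must be rerouted to a single element inside the corresponding block without disturbing the order pattern or the LR-minimal status. The enabling observation is exactly that $g_2'$ preserves LR-minimality, so the only elements of $\tau_2$ landing on inflated positions are themselves LR-minima of $\tau_2$; routing each such element to an LR-minimum of its block preserves both the pattern (the block occupies the entire cell of the former LR-minimum) and the LR-minimal status, while the shared marked point is automatically safe because, being a non-LR-minimum, it is never inflated.
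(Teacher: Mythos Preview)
Your proposal is correct and follows essentially the same strategy as the paper: induction on the lengths, base case handled directly in $C$, and the inductive step splitting into the two cases according to whether the marked element of the decomposable permutation lies in the skeleton $\alpha$ or inside a block $\beta_j$, with the amalgam rebuilt by re-inflating accordingly. You actually spell out more carefully than the paper does the rerouting of $g_2'$ through an inflated LR-minimum in case~(A); your justification there is sound, since any embedding automatically reflects LR-minimality (an element mapped to an LR-minimum must itself be one), so only LR-minima of $\tau_2$ can land on inflated positions.
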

\begin{proof}
Let $\pi_1, \pi_2 \in \LRcl{C}$ be permutations and $f_1, f_2$ embeddings of the singleton
permutation, $f_i$ into $\pi_i$ such that the image of $f_i$ avoids the LR-minima of $\pi_i$. We aim
to prove by induction on the length of $\pi_1$ and $\pi_2$ that there is a corresponding
LR-amalgamation of $\pi_1$ and $\pi_2$. Consider two cases. If neither of the two permutations
$\pi_1$ and $\pi_2$ can be obtained as an LR-inflation of a shorter permutation then they both 
belong
to $C$. And since $C$ itself is LR-amalgamable they have a desired LR-amalgamation that belongs to
$C$.

Without loss of generality we can now assume that $\pi_1$ can be obtained by LR-inflations as $\pi_1
= \alpha \langle \beta_1, \ldots, \beta_k \rangle$ where the permutations $\alpha, \beta_1, \ldots,
\beta_k$ are all strictly shorter than~$\pi_1$. Again we consider two separate cases. First, assume
that the image of the embedding $f_1$ lies inside the block corresponding to the $j$-th inflated
LR-minimum of $\alpha$, which is order-isomorphic to $\beta_j$. From induction we get a
LR-amalgamation $\sigma$ of $\beta_j$ and $\pi_2$ for the embeddings $f_1'$ and $f_2$, where $f_1'$
is the embedding $f_1$ restricted to the inflated block of $\beta_j$. Observe that the permutation
$\alpha \langle \beta_1, \ldots, \beta_{j-1}, \sigma, \beta_{j+1}, \ldots, \beta_k \rangle$ is
precisely the LR-amalgamation of $\pi_1$ and $\pi_2$ we were looking for.

Finally we have to deal with the situation when the image of the embedding $f_1$ lies outside of the
blocks corresponding to the inflated LR-minima of $\pi_1$. We can obtain from induction a
LR-amalgamation $\sigma$ of $\alpha$ and $\pi_2$ for the embeddings $f_1''$ and $f_2$, where $f_1''$
is the embedding $f_1$ restricted to the permutation $\alpha$. Let $g_1$ be the corresponding
embedding of $\alpha$ into $\sigma$ that preserves the LR-minima. We construct the desired
LR-amalgamation of $\pi_1$ and $\pi_2$ in the following way: take $\sigma$ and for every LR-minimum
of $\alpha$ inflate its image under $g_1$ with the corresponding permutation $\beta_i$. The
resulting permutation is clearly a 1-amalgamation of $\pi_1$ and $\pi_2$, and it also preserves the
LR-minima.

Lemma \ref{lemma:amalgamability of LR-closed} implies that $\LRcl{C}$ is also 1-amalgamable.
\end{proof}

\section{Main result}
\label{sec:main result}
Now we are ready to prove that 1-amalgamability and unsplittability are not equivalent by exhibiting
as a counterexample the LR-closure of $\Av{123}$. First, let us show that this class actually has a
nice basis consisting of only two patterns.

\begin{proposition}
\label{prop:Av123^LR basis}
The class $\Av{1423, 1342}$ is the closure of $\Av{123}$ under LR-inflation.
\end{proposition}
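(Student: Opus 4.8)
The plan is to prove the two inclusions $\Av{1423,1342} = \LRcl{\Av{123}}$ separately, and the crucial conceptual step is to understand the structure of LR-minima in both classes.

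First I would establish the easy inclusion $\LRcl{\Av{123}} \subseteq \Av{1423, 1342}$. Since $\LRcl{\Av{123}}$ is the smallest LR-inflation-closed class containing $\Av{123}$, it suffices to check that $\Av{1423, 1342}$ contains $\Av{123}$ (clear, as both $1423$ and $1342$ contain $123$, so every $123$-avoider avoids them) and that $\Av{1423, 1342}$ is itself closed under LR-inflations. By the characterization stated just before the proposition, a class is closed under LR-inflations if and only if every basis element is LR-simple. So the heart of this direction is to verify that both $1423$ and $1342$ are LR-simple, i.e.\ cannot be written as a nontrivial LR-inflation of a shorter permutation. In both patterns the only LR-minimum is the leading $1$, and inflating a single LR-minimum of a shorter permutation cannot reconstruct these patterns nontrivially; I would make this explicit by inspecting where the LR-minima sit.

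For the reverse inclusion $\Av{1423, 1342} \subseteq \LRcl{\Av{123}}$, I would argue that every $\pi \in \Av{1423, 1342}$ is an LR-inflation of some $\Av{123}$ permutation. Take any such $\pi$ and consider its LR-minima, which split $\pi$ into blocks: each maximal run of consecutive non-LR-minimal elements lies ``above'' one LR-minimum. The key claim is that deflating each LR-minimum together with the elements it ``owns'' yields a skeleton permutation lying in $\Av{123}$, and that the inflating blocks themselves lie in $\Av{123}$. The reason the blocks avoid $123$: if some block contained an ascending triple, then prepending the LR-minimum that dominates the block would create a forbidden pattern. I would show that a $123$ inside a single block, say at positions with values $a < b < c$ all exceeding the governing LR-minimum $m$ that precedes them, gives $m, a, b, c$; depending on the relative order this is controlled by avoidance of $1423$ and $1342$ (the governing minimum plays the role of the leading $1$, and the three larger elements must form one of the forbidden suffix patterns). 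The main obstacle is pinning down the right notion of ``block'' so that both the skeleton and the inflating pieces land in $\Av{123}$; I expect the clean statement to be that the elements lying strictly between consecutive LR-minima (in position) and above the later one form the inflating block, and that this partition exactly realizes $\pi$ as an LR-inflation.

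The technical crux, and where I would spend most care, is verifying the $\Av{123}$ property of the inflating blocks and of the skeleton simultaneously, using that $\pi$ avoids \emph{both} $1423$ and $1342$. The two forbidden patterns are precisely the two ways to extend a leading $1$ (an LR-minimum) by an ascending-ish arrangement of three larger elements that contains a $123$; avoiding both should be equivalent to saying no LR-minimum governs an ascending triple among the elements above it, which is exactly the condition that each block avoids $123$. Once the block decomposition is shown to give pieces in $\Av{123}$ and a skeleton in $\Av{123}$, the inclusion follows immediately because $\LRcl{\Av{123}}$ is closed under LR-inflations by definition. Combining both inclusions yields the claimed equality.
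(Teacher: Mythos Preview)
Your easy inclusion is correct and essentially matches the paper: both $1423$ and $1342$ have the leading $1$ as their unique LR-minimum, and no nontrivial interval of either pattern contains that element, so both are LR-simple and $\Av{1423,1342}$ is closed under LR-inflations.

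The hard inclusion, however, has a real gap. Your proposed decomposition does not work. First, the positional blocks you describe---an LR-minimum together with the non-LR-minimal elements before the next LR-minimum---are not value-intervals in general, so they cannot serve as the blocks of an LR-inflation. Take $\pi=2314\in\Av{1423,1342}$: the LR-minima sit at positions $1$ and $3$, and your two groups have value sets $\{2,3\}$ and $\{1,4\}$, which interleave; $\pi$ is not the inflation of $21$ by these groups. Second, your argument that each block avoids $123$ is wrong: an ascending triple $a<b<c$ inside a block, preceded by its governing LR-minimum $m$, produces the pattern $1234$, not $1423$ or $1342$. Concretely, $1234\in\Av{1423,1342}$ has a single LR-minimum whose ``block'' is the ascending triple $234$. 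Your assertion that $1423$ and $1342$ are ``precisely the two ways to extend a leading $1$ by three larger elements so as to contain a $123$'' is false---five of the six such extensions contain $123$.

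The paper's route is genuinely different. It looks at the RL-\emph{maxima} of $\pi$ rather than its LR-minima, and proves, using the two forbidden patterns, that the non-RL-maximal elements decompose into a descending chain of honest intervals. Deflating each such interval yields a skeleton that is a merge of two decreasing sequences (the RL-maxima and the deflated interval points), hence lies in $\Av{123}$; the deflated points are LR-minima of the skeleton, so $\pi$ is an LR-inflation of this skeleton by those intervals. Crucially, the intervals themselves are only known to lie in $\Av{1423,1342}$, not in $\Av{123}$, so the argument finishes by induction on length. You should not expect to avoid this induction: elements of $\LRcl{\Av{123}}$ may require arbitrarily many nested layers of LR-inflation, as $1234=12\langle 12\langle 12\rangle\rangle$ already illustrates.
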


\begin{proof}
First, let us show that any permutation from the LR-closure of $\Av{123}$ avoids both 1423, 1342.
Because both of these patterns contain 123, they would have to be created by the LR-inflations.
However, that is not possible since there is no nontrivial interval in either 1423 or 1342 which
contains the minimum element.

Now, let $\pi$ be a permutation from $\Av{1423, 1342}$.  We will show by induction that this
permutation can be obtained by a repeated LR-inflation of permutations from $\Av{123}$.  If $\pi$
does not contain $123$ the statement is trivially true.  Otherwise, consider the set of the
right-to-left maxima of $\pi$.  We want to show that the remaining elements of $\pi$ can be split
into a descending sequence of intervals.  If this holds then we can get $\pi$ as an LR-inflation of
a $123$-avoiding permutation by permutations order-isomorphic to the intervals.  And by induction
these shorter permutations can be obtained as repeated LR-inflations of 123-avoiding permutations.

Let us show that there is no occurrence of the pattern 132 that maps only the letter 2 on an
RL-maximum. For a contradiction suppose we have such an occurrence and a corresponding embedding $f$
of 132 into $\pi$.  Then there must be an element covered by $\pi_{f(3)}$ since it is not an
RL-maximum, i.e., an element $\pi_k$ such that $k > f(3)$ and $\pi_k > \pi_{f(3)}$.  However, $\pi$
restricted to these four indices would form the pattern 1342. Using the same argument, we can also
show that there is no occurrence of the pattern 132 which maps only the letter 3 on an RL-maximum as
we would get an occurrence of the pattern 1423 together with the RL-maximum covered by the image of
2.

And finally, we conclude by showing that the elements of $\pi$ that are not RL-maxima can indeed be
split into a descending sequence of intervals.  Let $I = \{ i_1, \ldots, i_m \}$ be the index set of
the RL-maxima of $\pi$ and furthermore define $i_0 = 0$ and $\pi_0 = n+1$.  Let us represent the
remaining elements of $\pi$ as a set $A$ of $n - m$ points on a plane

\begin{displaymath}
A = \{(i, \pi_i) \mid \mbox{$\pi_i$ is not an RL-maximum of $\pi$}\}.
\end{displaymath}

We define a partition of $A$ into sets $A_{j,k}$ for any $1 \leq j < k \leq m$

\begin{displaymath}
A_{j,k} = \{ (x,y) \mid (x, y) \in A \mbox{ and } i_{j-1} < x < i_j \mbox{ and } \pi_{i_k} < y <
\pi_{i_{k-1}}\}.
\end{displaymath}

\begin{figure}
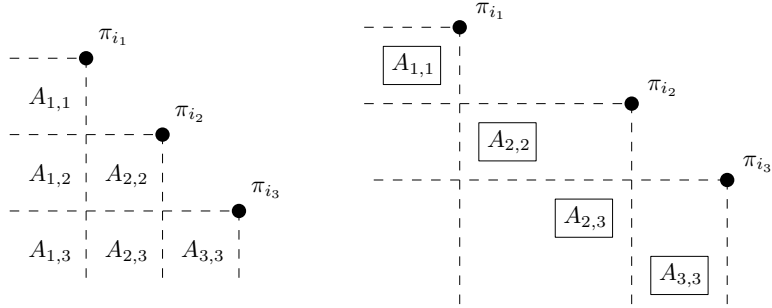

\centering
\centeredfigure[scale=0.9]{sets-Ajk} \hspace*{25pt} \centeredfigure[scale=0.9]{sets-example}
\caption{Partition of a general permutation with 3 RL-maxima into the sets $A_{j,k}$ and an example
how the
non-empty sets might look for some $\pi \in \Av{1423, 1342}$.}
\label{fig:sets-Ajk}
\end{figure}

For any $j$, $k$ and $l$, every element of $A_{j,k}$ is larger than all the elements of $A_{j+1,l}$
in the second coordinate since otherwise we would get a 132 occurrence with the letter 3 mapped to
$\pi_{i_j}$.  Similarly for any $j$, $k$ and $l$, every element of $A_{j,k}$ is to the left of all
the elements of $A_{l,k+1}$ as otherwise we would get a 132 occurrence with the letter 2 mapped to
$\pi_{i_k}$.  This transitively implies that all non-empty sets $A_{j,k}$ correspond to a sequence
of descending intervals.
\end{proof}

In order to show that $\Av{1423, 1342}$ is splittable, we shall first prove the LR-splittability of
$\Av{123}$ and then apply the results we have obtained in Subsection \ref{sec:LR-splittability}.

\begin{lemma}
\label{lemma:123-splittable}
The class $\Av{123}$ is LR-splittable, and more precisely, it satisfies
\[\Av{123} \subseteq \Av{463152} \LRdot \Av{463152}.\]
\end{lemma}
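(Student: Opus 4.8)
The plan is to exhibit, for every $\pi\in\Av{123}$, an explicit red--blue coloring of its non-LR-minima witnessing that $\pi$ is an LR-merge of two $463152$-avoiding permutations. I would first record the structural skeleton of $123$-avoiders. The LR-minima of $\pi$ form a decreasing subsequence $M$ by definition; moreover the non-LR-minima form a decreasing subsequence $N$ as well, since if two non-LR-minima $\pi_i<\pi_j$ with $i<j$ were increasing, the element witnessing that $\pi_i$ is not an LR-minimum would sit before $\pi_i$ and below it, producing a $123$. Thus $\pi$ is a merge of the two decreasing sequences $M$ and $N$. I would also note that any subpermutation of $\pi$ containing all of $M$ has exactly $M$ as its own set of LR-minima (an LR-minimum of $\pi$ stays minimal, and a non-LR-minimum still has the relevant running minimum from $M$ before it). Hence in an LR-merge the two parts share precisely $M$ and partition $N$, and the whole task reduces to $2$-coloring $N$ so that $M$ together with each color class avoids $463152$.

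Next I would reduce the forbidden pattern to its essential core. Writing $463152$ as the rank sequence $4,6,3,1,5,2$, one checks that in any occurrence the three elements playing the roles of the values $6$, $5$ and $2$ each have a smaller element earlier within the occurrence (the role-$4$ element precedes role $6$; the role-$1$ element precedes roles $5$ and $2$). Consequently these three elements are necessarily non-LR-minima of the ambient permutation, so in a monochromatic copy they all receive the same color. The remaining three elements, playing roles $4$, $3$, $1$, may be either shared LR-minima from $M$ or same-colored members of $N$. Therefore it suffices to design the coloring so that no color class contains a triple $u>v>w$ (with positions increasing) that can be \emph{completed} to a $463152$ by available supporting elements; a short computation shows that such a completion forces two supporting LR-minima of value strictly between $w$ and $v$ straddling the position of $u$, together with one supporting element of value below $w$ lying between $u$ and $v$.

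The heart of the argument, and the step I expect to be the main obstacle, is to construct a $2$-coloring of $N$ that defeats every such completable core and to verify that it does. The verification is in principle a finite case analysis: fix six elements in the $463152$ configuration, record for each whether it lies in $M$ or in $N$ and, if in $N$, its color, and derive a contradiction in the monochromatic case using the two decreasing sequences and the staircase formed by the LR-minima. The subtlety is that the obvious colorings fail: because the supporting LR-minima always appear in even numbers between the critical pairs (two LR-minima between the positions of $u$ and $v$, and two LR-minima in value between $w$ and $v$), any coloring defined by the parity of the number of LR-minima to the left of a point, or by the parity of its value-band, assigns $u$, $v$ and $w$ the same color. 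The coloring must therefore be more refined than a simple parity; I would look for it in the geometry of the staircase---either a carefully chosen function of the step on which each non-LR-minimum sits, or a greedy left-to-right rule that gives each non-LR-minimum the color avoiding completion of a core---and the bulk of the work is proving that the chosen rule never produces a monochromatic core. This single structural lemma then feeds Lemma~\ref{lemma:LR-splittable conditions} and the machinery of Subsection~\ref{sec:LR-splittability} to yield the splittability of $\Av{1423,1342}$.
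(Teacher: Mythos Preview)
Your reduction is accurate and matches the paper's setup: the non-LR-minima of a $123$-avoider form a decreasing sequence, in any occurrence of $463152$ the three elements playing the roles of $6$, $5$, $2$ are forced to be non-LR-minima, and the remaining roles $4$, $3$, $1$ must map to LR-minima (since each of them covers another element of the pattern, so mapping any of them to a non-LR-minimum would create a $123$). So everything hinges on two-coloring the descending sequence $N$.

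The gap is that you never actually produce the coloring. You correctly flag this as ``the heart of the argument'' and then only gesture at possibilities (``a carefully chosen function of the step'', ``a greedy left-to-right rule''), explicitly leaving ``the bulk of the work'' undone. That missing step is exactly the content of the lemma; without it this is a plan, not a proof.

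For comparison, the paper's coloring is concrete and short. Process the non-LR-minima left to right in \emph{runs}: if $\pi_i$ is the first unprocessed non-LR-minimum, let $\pi_j$ be the rightmost LR-minimum covering $\pi_i$, and let the current run consist of all subsequent non-LR-minima still covered by $\pi_j$; then start a new run. Give consecutive runs alternating colors. The key property this buys is that the first elements of two consecutive runs cannot be covered by a common LR-minimum, hence two non-LR-minima covered by a common LR-minimum always lie in the same run. Now in a monochromatic $463152$, the image of $5$ shares a covering LR-minimum with the image of $6$ (namely the image of $4$) and with the image of $2$ (namely the image of $1$); so all three lie in one run and are covered by a single LR-minimum. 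That LR-minimum then also covers the image of $3$, contradicting that this image is itself an LR-minimum. This is precisely the ``geometry of the staircase'' idea you were reaching for; the run decomposition is the missing ingredient.
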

\begin{proof}
Let $\pi$ be a permutation from $\Av{123}$. Clearly $\pi$ is a merge of two descending sequences,
its LR-minima and the remaining elements. The idea is to decompose the non-minimal elements into
runs such that for every run there is a specific LR-minimum covering each element of the run but
covering none from the following run.  This can be done easily by the following greedy algorithm. In
one step of the algorithm, let $\pi_i$ be the first non-minimal element which was not used yet and
let $j$ be the maximum integer such that $\pi_j$ is an LR-minimum covering $\pi_i$. The next run
then consists of all non-minimal elements starting from $\pi_i$ that are covered by $\pi_j$.

We color each run blue or red such that adjacent runs have different colors.  We obtained a red-blue
coloring of the non-minimal elements and it only remains to check whether the monochromatic
permutations form a proper subclass of $\Av{123}$. Observe that the first elements of two adjacent
runs cannot be covered by a single LR-minimum, which implies that two elements from different
non-adjacent runs cannot be covered by a single LR-minimum. By this observation, in the
monochromatic permutations $\pi_B$ and $\pi_R$ any two elements covered by the same LR-minimum must
belong to the same run.

\begin{figure}
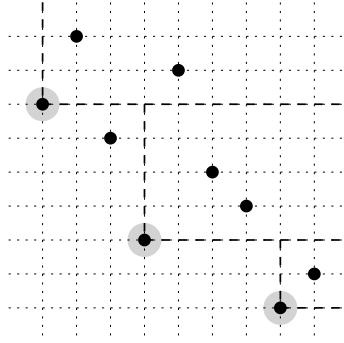

\centering
\centeredfigure{av123-lrsplit}
\caption{For example the 123-avoiding permutation 796385412 with the non-minimal elements split
into three different runs.}
\label{fig:av123-lrsplit}
\end{figure}

We claim that a monochromatic copy of the pattern $463152 \in \Av{123}$ can never be created this
way. Assume for contradiction that there is a permutation $\pi \in \Av{123}$ on which the algorithm
creates a monochromatic copy of 463152 and let $f$ be the corresponding embedding of 463152 into
$\pi$. Observe that every LR-minimum of 463152 is covering some other element and therefore $f$ must
preserve the property of being an LR-minimum, otherwise we would get an occurrence of the pattern
123. Following our earlier observations, the elements $\pi_{f(6)}$, $\pi_{f(5)}$ and $\pi_{f(2)}$
must fall into the same run since $\pi_{f(5)}$ shares LR-minima with both of the other two elements.
And because elements of the same run are covered by a single LR-minimum, there is an LR-minimum
$\pi_i$ covering $\pi_{f(6)}$ and $\pi_{f(2)}$. However, $\pi_i$ must then also cover $\pi_{f(3)}$
which contradicts the fact that $\pi_{f(3)}$ itself is an LR-minimum of~$\pi$.
\end{proof}

\begin{corollary}
\label{cor:123-closure split}
The class $\Av{1423, 1342}$ is splittable.
\end{corollary}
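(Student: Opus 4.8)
The plan is to combine the three preceding results almost mechanically. By Proposition~\ref{prop:Av123^LR basis} the target class is an LR-closure, $\Av{1423, 1342} = \LRcl{\Av{123}}$, so it suffices to prove that this LR-closure is splittable; I will in fact establish the stronger fact that it is LR-splittable, since every LR-splittable class is splittable. Lemma~\ref{lemma:123-splittable} already gives the containment $\Av{123} \subseteq \Av{463152} \LRdot \Av{463152}$, which is precisely the hypothesis of Proposition~\ref{prop:LR-split split} with $C = \Av{123}$ and $D = E = \Av{463152}$. Applying that proposition yields $\LRcl{\Av{123}} \subseteq \LRcl{\Av{463152}} \LRdot \LRcl{\Av{463152}}$, so the whole argument reduces to verifying that this LR-splitting is nontrivial.

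By the second part of Proposition~\ref{prop:LR-split split}, nontriviality holds as soon as $\LRcl{\Av{463152}}$ fails to contain the entire class $\Av{123}$. The clean way to see this is to show that $\Av{463152}$ is already closed under LR-inflations, so that $\LRcl{\Av{463152}} = \Av{463152}$. Since $\Av{463152}$ is principal with basis $\{463152\}$, the basis characterization of LR-inflation-closed classes reduces this to checking that $463152$ is LR-simple. For that I would verify the stronger statement that $463152$ is a simple permutation, by a brief inspection confirming that none of its windows of consecutive positions has a set of values forming consecutive integers; a simple permutation is in particular LR-simple.

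With $\LRcl{\Av{463152}} = \Av{463152}$ in hand, the separating witness is the permutation $463152$ itself: it belongs to $\Av{123}$ but plainly does not avoid $463152$, so $\Av{123} \not\subseteq \LRcl{\Av{463152}}$. Hence the LR-splitting of $\LRcl{\Av{123}}$ into the two copies of $\Av{463152}$ is nontrivial, and therefore $\Av{1423, 1342} = \LRcl{\Av{123}}$ is LR-splittable and a fortiori splittable.

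The essentially only nontrivial content is the nontriviality check, and within it the short combinatorial verification that $463152$ is simple; everything else is a formal substitution into the cited results. The one point that requires care is that Proposition~\ref{prop:LR-split split} expresses nontriviality through the original class $C = \Av{123}$ rather than through its closure $\LRcl{\Av{123}}$, so I must separate $\Av{123}$ from $\LRcl{\Av{463152}}$ and not merely $\LRcl{\Av{123}}$ from $\Av{463152}$; the witness $463152$ is tailored to do exactly this.
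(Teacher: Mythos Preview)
Your proposal is correct and follows essentially the same line as the paper's proof: invoke Lemma~\ref{lemma:123-splittable}, use the LR-simplicity of $463152$ to ensure $\LRcl{\Av{463152}}=\Av{463152}$ and hence nontriviality, apply Proposition~\ref{prop:LR-split split}, and identify the result with $\Av{1423,1342}$ via Proposition~\ref{prop:Av123^LR basis}. You are simply more explicit than the paper in spelling out the nontriviality check and in deriving LR-simplicity of $463152$ from its simplicity.
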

\begin{proof}
In the previous Lemma \ref{lemma:123-splittable} we showed that $\Av{123}$ is LR-splittable, more
precisely that $\Av{123} \subseteq \Av{463152} \LRdot \Av{463152}$ . Since the permutation 463152 is
LR-simple, we get the splittability of $\LRcl{\Av{123}}$ from Proposition \ref{prop:LR-split split}.
Finally, owing to Proposition~\ref{prop:Av123^LR basis}, we know that $\LRcl{\Av{123}}$ and
$\Av{1423, 1342}$ are in fact identical.
\end{proof}

Our final task is to show that $\Av{1423, 1342}$ is 1-amalgamable by proving the LR-amalgamability
of $\Av{123}$. In order to do that we will use the following result which is due to Waton
\cite{Waton2007}. Note that Waton in fact proved the equivalent claim for parallel lines of positive
slope and the permutation class $\Av{321}$.

\begin{proposition}[Waton \cite{Waton2007}]
\label{prop:waton lines}
The class of permutations that can be drawn on any two parallel lines of negative slope is
$\Av{123}$.
\end{proposition}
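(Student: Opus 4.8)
The plan is to reduce the claim to Waton's theorem for positive slope and $\Av{321}$ by a reflection symmetry, and, for completeness, to indicate how the characterization is proved directly. First I would record the symmetry precisely. Reflecting the plane across a vertical line sends the diagram of a permutation $\pi$ of length $n$ to the diagram of its reverse $\pi^r$, defined by $\pi^r_i = \pi_{n+1-i}$, and it negates the slope of every line while preserving parallelism. Hence $\pi$ can be drawn on two parallel lines of negative slope if and only if $\pi^r$ can be drawn on two parallel lines of positive slope. Since an increasing triple in $\pi$ corresponds to a decreasing triple in $\pi^r$, we also have $\pi \in \Av{123}$ if and only if $\pi^r \in \Av{321}$. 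Applying Waton's result \cite{Waton2007} to $\pi^r$ then yields exactly the stated equivalence for $\pi$.

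For a self-contained argument, the two inclusions would be proved separately. The easy inclusion---every permutation drawable on two parallel lines of negative slope avoids $123$---follows by observing that the points lying on a single line of negative slope have strictly decreasing heights as their $x$-coordinates increase, so they form a decreasing subsequence. A drawing thus exhibits $\pi$ as a merge of (at most) two decreasing subsequences, one per line. An occurrence of $123$ would be an increasing subsequence of length three, and by pigeonhole two of its three entries would lie on the same line; but two entries on the same line form a decreasing pair, a contradiction.

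The harder inclusion---every $\pi \in \Av{123}$ admits such a drawing---is the real content, and this is where I expect the main obstacle. I would start from the canonical decomposition of a $123$-avoiding permutation: its left-to-right minima form a decreasing sequence (as in any permutation), and its remaining elements also form a decreasing sequence, since an ascent between two non-minima together with a smaller element preceding the left one would create a $123$. The idea is to place the left-to-right minima on one line and the remaining elements on a parallel line of slope $-1$, and then to choose the two intercepts and a strictly increasing assignment of $x$-coordinates $\xi_1 < \cdots < \xi_n$ (matching the positions) so that the induced heights match the values. Writing $\delta$ for the gap between the two intercepts, the value order is respected precisely when, for each minimum at position $p$ lying before a non-minimum at position $q$, the difference $\xi_q - \xi_p$ is below $\delta$ exactly when $\pi_p < \pi_q$ and above $\delta$ otherwise; the remaining cross-pairs (a minimum after a non-minimum) are automatically correct because a left-to-right minimum is smaller than everything preceding it. The main difficulty is to verify that these threshold constraints are simultaneously satisfiable by a single increasing sequence and a single $\delta$. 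I would establish this by exploiting the nested structure forced by both classes being decreasing: for a fixed minimum the non-minima requiring a small gap are exactly those nearer in position than those requiring a large gap, and across different minima the constraints never conflict because the minima themselves are ordered consistently with the $x$-coordinates. Making this precise---for instance by an explicit choice of the $\xi_p$ or by induction on $n$ inserting one point at a time---is the crux of the construction.
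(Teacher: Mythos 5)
Your first paragraph is exactly the paper's argument: the paper offers no independent proof of this proposition, but simply cites Waton's result for parallel lines of positive slope and $\Av{321}$, with the understanding that reflection (equivalently, passing to the reverse permutation) converts it into the stated form, which is precisely your reduction. The supplementary ``self-contained'' construction you sketch is not needed once the reduction is in place, and as you yourself note its hard direction is left incomplete, so the citation-plus-symmetry argument is the part that carries the proof, and it is correct.
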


\begin{lemma}
The class $\Av{123}$ is LR-amalgamable.
\end{lemma}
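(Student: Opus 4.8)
\section*{Proof proposal}

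The plan is to realize the amalgamation geometrically, via Waton's representation from Proposition~\ref{prop:waton lines}. I fix the two parallel lines $\ell_0\colon y=-x$ and $\ell_1\colon y=-x+1$, and call a finite point set lying on $\ell_0\cup\ell_1$ with distinct $x$- and distinct $y$-coordinates a \emph{drawing}; by Proposition~\ref{prop:waton lines} the permutations induced by drawings are exactly the members of $\Av{123}$. The first observation is that in any drawing every point of the lower line $\ell_0$ is an LR-minimum: a point to its left on $\ell_0$ lies higher, and a point to its left on $\ell_1$ lies higher as well, since the vertical gap between the lines equals $1$. In particular every non-LR-minimum is forced onto $\ell_1$.

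The heart of the argument is a \emph{canonical drawing lemma}: every $\tau\in\Av{123}$ has a drawing in which the points of $\ell_0$ are exactly the LR-minima of $\tau$, so that all LR-minima sit on $\ell_0$ and all non-LR-minima on $\ell_1$. To prove it I would place element $k$ at abscissa $t_k$ on line $c_k\in\{0,1\}$, where $c_k=1$ iff $\tau_k$ is a non-LR-minimum, giving it height $c_k-t_k$. Requiring that abscissae respect positions and heights respect values turns out to impose only two kinds of constraints, both between an LR-minimum $k$ and a later non-LR-minimum $l$: a \emph{close} constraint $t_l-t_k<1$ when $\tau_k<\tau_l$, and a \emph{far} constraint $t_l-t_k>1$ when $\tau_k>\tau_l$; every other pair is automatically consistent because the LR-minima and the non-LR-minima each form a decreasing sequence. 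This is a system of difference constraints, hence feasible unless it contains a negative cycle, and the minimal such conflict is a close pair $(k_1,l_1)$ and a far pair $(k_0,l_0)$ with $k_1\le k_0<l_0\le l_1$. But then monotonicity of the two decreasing subsequences forces $\tau_{k_1}<\tau_{l_1}\le\tau_{l_0}<\tau_{k_0}\le\tau_{k_1}$, a contradiction, so the canonical drawing exists.

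With canonical drawings of $\tau_1$ and $\tau_2$ in hand, both on the same pair of lines and with the two marked non-LR-minima lying on the common upper line $\ell_1$, I would finish by overlaying them. Sliding the drawing of $\tau_2$ along the direction $(1,-1)$ keeps every point on its line and moves its marked point anywhere along $\ell_1$, so I translate it until the two marked points coincide; a generic infinitesimal slide of the remaining points of $\tau_2$, leaving the shared point fixed, puts the union into general position without disturbing any strict order relation inside either copy. The result is again a drawing on two parallel lines, so it induces a permutation $\sigma\in\Av{123}$ containing faithful copies of $\tau_1$ and $\tau_2$ whose marked elements are identified, yielding embeddings $g_1,g_2$ with $g_1\circ f_1=g_2\circ f_2$. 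These embeddings preserve LR-minima: an LR-minimum of $\tau_i$ lies on $\ell_0$ and is therefore an LR-minimum of $\sigma$ by the opening observation, while a non-LR-minimum of $\tau_i$ keeps, inside its own copy, a witness lying below and to its left, so it remains a non-LR-minimum of $\sigma$.

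The main obstacle is the canonical drawing lemma; everything else is bookkeeping, but it is precisely there that $123$-avoidance is used, through the fact that both the LR-minima and the non-LR-minima decrease, which is what rules out the forbidden close/far configuration. I would also need to treat with some care the reduction of an arbitrary negative cycle to this single close/far conflict, and the degenerate cases in which the four elements $k_0,k_1,l_0,l_1$ partly coincide.
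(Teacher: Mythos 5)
Your proposal follows the same overall strategy as the paper's proof: represent both permutations on a fixed pair of parallel lines of negative slope via Proposition~\ref{prop:waton lines}, translate one drawing along the direction of the lines to identify the two marked points, perturb into general position, and take the union. The genuine difference is your canonical drawing lemma, and it is an addition of substance rather than a detour. The paper fixes arbitrary point sets $A_1, A_2$ realizing $\pi_1,\pi_2$ and asserts that the union is ``easily seen'' to be an LR-amalgamation, but with arbitrary drawings the resulting embeddings need not preserve LR-minima: taking the lines $y=-x$ and $y=-x+1$, one can draw $\pi_1=312$ with its first element on the upper line (it is larger than everything to its right, so this is a legal drawing) and $\pi_2=312$ with its first element on the lower line; identifying the two occurrences of the value $2$ can produce the union $45213$, in which the image of the first LR-minimum of $\pi_1$ is the non-LR-minimal element $5$. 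Your observation that every lower-line point is automatically an LR-minimum of the union, combined with the requirement that each $\pi_i$ be drawn with its LR-minima exactly on $\ell_0$, is precisely what makes the final ``preserves LR-minima'' claim go through, so your write-up is in this respect more complete than the paper's. The canonical drawing lemma is true, and your reduction of the constraints is correct: the only nontrivial conditions are the close/far constraints between an LR-minimum and a later non-LR-minimum, since the LR-minima and (by $123$-avoidance) the non-LR-minima each decrease. The one step still needing work is the one you flag yourself: an infeasible strict difference system is witnessed by a cycle of nonpositive weight, which could a priori use several close and several far edges, so the reduction to a single nested close/far pair requires an uncrossing argument. It is cleaner to bypass cycles altogether: for each non-LR-minimum $l$ the binding constraints collapse to $t_{a}+1<t_l<t_{b}+1$ where $a<b$ are two consecutive LR-minima depending monotonically on $l$, and a left-to-right greedy placement with suitably small increments then satisfies everything. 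Modulo that bookkeeping, your proof is correct.
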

\begin{proof}
Fix arbitrary two parallel lines of negative slope in the plane. Let $\pi_1$ and $\pi_2$ be
permutations avoiding $123$ and $f_1$ and $f_2$ be mappings where $f_i$ is an embedding of the
singleton permutation into $\pi_i$ and its image is not an LR-minimum of $\pi_i$. According to
Proposition \ref{prop:waton lines} both $\pi_1$ and $\pi_2$ can be drawn from our fixed parallel
lines. Fix sets of points $A_1$ and $A_2$ which lie on these lines whose corresponding respective
permutations are $\pi_1$ and $\pi_2$. Moreover, we can choose the sets such that the elements in the
images of $f_1$ and $f_2$ share the same coordinates. Otherwise we could translate one of the sets
in the direction of the lines to align these two points. Finally, if a point $x \in A_1$ and a point
$y \in A_2$ share one identical coordinate we can move $x$ a little bit in the direction of the
lines without changing the permutation corresponding to the set $A_1$.

We may easily see that the permutation corresponding to the union $A_1 \cup A_2$ with the natural
mappings of $\pi_1$ and $\pi_2$ is the desired LR-amalgamation of $\pi_1$ and $\pi_2$.
\end{proof}

\begin{figure}
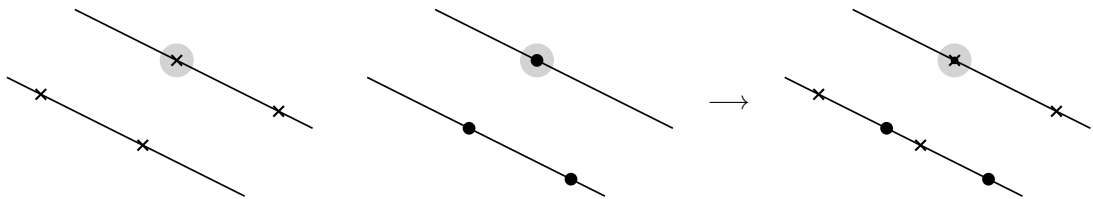

\centering
\begin{displaymath}
\mathfigure{perm-3142-lines} \qquad \mathfigure{perm-231-lines} \quad \longrightarrow \quad
\mathfigure{av123-amalgam}
\end{displaymath}
\caption{Example of two permutations 3142 and 231 drawn from two parallel lines with highlighted
embeddings of the singleton permutation and their LR-amalgamation 532614.}
\label{fig:LR-amalgamation}
\end{figure}
Applying Proposition \ref{prop:LR-am 1-am}, we get the desired result that the class $Av(1423,
1342)$ is indeed 1-amalgamable.

\begin{corollary}
\label{cor:123-closure am}
The class $\Av{1423, 1342}$ is 1-amalgamable.
\end{corollary}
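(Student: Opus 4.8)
The plan is simply to chain together the three results immediately preceding this corollary, as all the genuine content has already been established elsewhere. The immediately preceding lemma shows that $\Av{123}$ is LR-amalgamable, so I would first feed this class into Proposition~\ref{prop:LR-am 1-am}. That proposition guarantees that the LR-closure of an LR-amalgamable class is itself LR-amalgamable, and hence 1-amalgamable; applying it to the class $\Av{123}$ therefore yields that $\LRcl{\Av{123}}$ is 1-amalgamable.

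It then remains only to rewrite $\LRcl{\Av{123}}$ in terms of an explicit basis. By Proposition~\ref{prop:Av123^LR basis} we have the identity $\LRcl{\Av{123}} = \Av{1423, 1342}$, so the 1-amalgamability just obtained is precisely the assertion that $\Av{1423, 1342}$ is 1-amalgamable, which is exactly the claim. There is essentially no obstacle at the level of the corollary itself: the substantive work lies in the LR-amalgamability of $\Av{123}$ (established in the preceding lemma via Waton's two-parallel-lines characterization of $\Av{123}$) and in the transfer of LR-amalgamability of a class to 1-amalgamability of its LR-closure (Proposition~\ref{prop:LR-am 1-am}).

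One point worth flagging, so that the chain is logically airtight, is that Proposition~\ref{prop:LR-am 1-am} ultimately routes its conclusion through Lemma~\ref{lemma:amalgamability of LR-closed}, which passes from LR-amalgamability to 1-amalgamability only under the hypothesis that the class in question is closed under LR-inflations. This hypothesis is met automatically here, since $\LRcl{\Av{123}}$ is by construction closed under LR-inflations, and Proposition~\ref{prop:Av123^LR basis} simultaneously tells us that this LR-closed class is nothing other than $\Av{1423, 1342}$. With these ingredients assembled, the corollary follows in a single line.
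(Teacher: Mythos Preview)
Your proposal is correct and matches the paper's own argument essentially line for line: the paper simply applies Proposition~\ref{prop:LR-am 1-am} to the preceding lemma's conclusion that $\Av{123}$ is LR-amalgamable, with Proposition~\ref{prop:Av123^LR basis} identifying $\LRcl{\Av{123}}$ with $\Av{1423,1342}$. Your additional remark about Lemma~\ref{lemma:amalgamability of LR-closed} and the LR-closure hypothesis is accurate and makes the chain more explicit than the paper bothers to, but adds nothing new.
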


\section{Further directions}

Using our results about LR-inflations, we proved that a single class $\Av{1423, 1342}$ is both
1-amalgamable and splittable. Naturally, the same holds for its three symmetrical classes, i.e.
$\Av{3241, 2431}$, $\Av{4132, 4213}$ and $\Av{2314, 3124}$, since both splittability and
1-amalgamability is preserved when looking at the reverses or complements of the permutations.
However, the question remains whether these results can be used to find more classes that are both
1-amalgamable and splittable or even infinitely many such classes. It would be particularly
interesting to find other such classes with small basis.

Our method of obtaining a splittable 1-amalgamable class was based on the notion of LR-inflations,
and the related concepts of LR-amalgamations and LR-splittings. These notions can be generalized to
a more abstract setting as follows: suppose that we partition every permutation $\pi$ into
`inflatable' and `non-inflatable' elements, in such a way that for any embedding of a permutation
$\sigma$ into $\pi$, the non-inflatable elements of $\sigma$ are mapped to non-inflatable elements
of~$\pi$. We might then consider admissible inflations of $\pi$ (in which only the inflatable
elements can be inflated), admissible splittings of $\pi$ (which are based on two-colorings in which
each inflatable element receives both colors), as well as admissible amalgamations (where we
amalgamate by identifying non-inflatable elements, and the amalgamation must preserve the inflatable
elements of the two amalgamated permutations). In this paper, we only considered the special case
when the inflatable elements are the LR-minima; however, the main properties of LR-inflations,
LR-splittings and LR-amalgamations extend directly to the more abstract setting.

\bibliography{mybib}

\end{document}